\newtheorem{theo}{Theorem}[section]
\newtheorem{lemma}[theo]{Lemma}
\newtheorem{prop}[theo]{Proposition}
\theoremstyle{definition}
\theoremstyle{remark}
\newtheorem{exam}[theo]{Example}
\begin{document}

\title{Structure trees and   networks}
\author{M.J.Dunwoody}

\begin{abstract} 

In this paper it is shown that for any network there is a uniquely determined network based on a {\it structure tree} that provides a convenient way of determining  a minimal cut  separating a pair $s, t$ where each of $s, t$ is either a vertex or an end in the original network.
A Max-Flow Min-Cut Theorem is proved for any network.    In the case of a Cayley Graph for a finitely generated group the theory provides another 
proof of Stallings' Theorem on the structure of groups with more than one end.

\end{abstract}
\maketitle

\newcommand{\Z}{\mathbb{Z}}
\newcommand{\N}{\mathbb{N}}
\newcommand{\R}{\mathbb{R}}
\newcommand{\Q}{\mathbb{Q}}
\newcommand{\cO}{\mathcal{O}}

\newcommand{\B}{\mathcal{B}}
\newcommand{\A}{\mathcal{A}}
\newcommand{\T}{\mathcal{T}}
\newcommand{\G}{\mathcal{G}}
\newcommand{\bv}{{\bf v}}
\newcommand{\bu}{{\bf u}}
\newcommand{\bp}{{\bf p}}
\newcommand{\bz}{{\bf z}}
\newcommand{\C}{\mathcal{C}}

\newcommand{\bP}{\mathcal{P}}
\newcommand{\V}{\mathcal{V}}
\newcommand{\ci}{\mathcal{I}}
\newcommand{\h}{\mathcal{H}}
\newcommand{\ca}{\mathcal{A}}
\newcommand{\cm}{\mathcal{M}}
\newcommand{\cs}{\mathcal{S}}
\newcommand{\m}{\mu}
\newcommand{\ce}{\mathcal{E}}
\newcommand{\cR}{\mathcal{R}}

\newcommand{\cz}{\mathcal{Z}}
\newcommand{\cb}{\mathcal{B}}

\newcommand{\bG}{\bar G}
\newcommand{\bH}{\bar H}
\renewcommand{\d}{{\rm d}}

\newcounter{fig}
\setcounter{fig}{0}

\section{Introduction}
In this paper a way of extending the theory of finite networks to networks based on arbitrary graphs is presented.    
Results for finite networks such as the Max-Fflow Min-Cut Theorem (MFMC) and the existence of a Gomory-Hu Tree are shown to be special
cases of our results for more general networks.   It is also the case that Stallings' Theorem on the structure of groups with more than one end also
follows from the theory developed here.     It is very pleasing (to me at least) that there is a theory that includes both the Stallings' Theorem and the MFMC.

In his breakthrough work  \cite {[St]}  on groups with more than one end,  Stallings showed that a finitely generated group has a Cayley graph (corresponding to a finite generating set) with more than one end if
and only if it has a certain structure.   At about that time Bass and Serre (see \cite {[DD]} or \cite {[S]})  developed their theory of groups acting on trees and it was clear that
the structure of  a  group with more than one end, as  in Stallings'  Theorem,  was associated with an action on a tree.   In \cite {[D1]} I gave a proof of Stallings' result by constructing
a tree on which the relevant group acted.   This involved showing that if the finitely generated  group $G$  had more than one end, then there is a subset $B \subset G$ such that both $B$ and $B^*$ are infinite,  $\delta B$ is finite, and the set $\ce = \{ gB | g \in G\} $ is almost nested.   A set $\ce $ of cuts is almost nested if for every $A, B \in \ce$ at least one corner of $A$ and $B$ is finite.   A corner of $A, B$ is one of the four sets 
$A\cap B, A^*\cap B, A\cap B^*, A^*\cap B^*$,  where $A^*$ is the complement of $A$.

In \cite {[D2]} I gave a stronger result by showing that if a group $G$ acts on a graph $X$ with more than one end, then there exists a subset 
$B \in \B X$ such that $B$ and $B^*$ are both infinite and for any $g \in G$ the sets $B$ and $g B$ are nested, i.e. at least one of the four corners
is empty.    The set of all such $gB$ can 
be shown to be the edge set of a tree, called a structure tree.

This result was further extended by Warren Dicks and myself   \cite {[DD]}.  In Chapter II of that book it is shown  that for any graph $X$ the Boolean ring $\B X$
has a particular nested set of generators invariant under the automorphism group of $G$.   
    At the time I thought that the result when applied to  finite graphs was of little interest.
This was partly because an action of a group on a finite tree is always trivial, i.e. there is always a vertex of the tree fixed by the whole group.
This is not the case for groups acting on infinite trees: the theory of such actions is the subject matter of Bass-Serre theory.
Also for a finite graph $X$, there is always a nested set of generators for $\B X$ consisting of single elements subsets.
The belated realisation that the theory developed in \cite {[DD]} might be of some significance for finite networks occurred only recently.

In 2007 Bernhard Kr\" on asked me if one could develop a theory of structure trees for graphs with more that one vertex end rather than
more than one edge end.    These are connected graphs that have more than one infinite component after removing finitely many vertices.
We were able to develop such a theory in \cite {[DK]} .  In the course of  our work on this,  we realised that we could develop a theory of  structure trees 
for finite graphs that generalised the theory of Tutte \cite {[T]}, who obtained a structure tree result for $2$-connected finite graphs that are not $3$-connected.     The theory for vertex cuts is  more complicated than that for edge cuts.
In 2008 I learned about the cactus theorem for min-cuts from Panos Papasoglu.   This theory, due to Dinits, Karsanov and Lomonosov   \cite {[DKL]} (see also \cite {[FF]}) is for finite networks.
It is possible, with a bit more work, to deduce the cactus theorem  from the proof of  Theorem \ref {maintheorem} .
Evangelidou and Papasoglu  \cite {[EP]} have obtained a cactus theorem for edge cuts in infinite graphs, giving a new proof of Stallings' Theorem.
In \cite {[DW]} Diekert and Weiss gave a definition for thin cuts, which is equivalent to the one given in \cite {[DD]} (see Lemma \ref {thin}),
but which made more apparent the connection with the Max-Flow Min-Cut Theorem.  I also had a very helpful email exchange with Armin Weiss.
Weiss told me about Gomory-Hu trees that are structure trees in finite networks.  

Thinking about these matters finally led me  to think about structure trees for edge cuts in finite graphs and networks and the realisation
that the theory developed in \cite {[DD]} might be of some interest  when applied  to finite networks.

In Section 2 the theory for finite networks is recalled.  The theory is presented in such a way as to suggest the way it can be 
generalised to arbitrary networks.    This generalisation is obtained in Section 3.
For any network $N$ we obtain a canonically  determined sequence of trees $T_n$ that provide complete information
about the separation of a pair $s,t$ where each of $s$ and $t$ is either a vertex or an end of $X$.   
It is only possible to obtain all such information from a single tree $T_n$ if $X$ is {\it accessible}.   A graph is accessible if there is an integer
$n$ such that any two ends can be separated by removing at most $n$ edges.
   However there are locally finite vertex transitive
graphs that are inaccessible.  Such graphs are constructed in \cite {Dun} or \cite {Dunwoody1993}.

The situation for edge cuts contrasts with the situation for vertex cuts.   Thus there is a canonically determined sequence of trees that
separates a pair $s,t$ from the set of vertices or ends of the graph $X$.   For vertex cuts, one can only find a canonically defined structure tree
that separates a pair $\kappa $-inseparable sets or a pair of vertex ends,  where $\kappa $ is the smallest integer for which it is possible to separate
such a pair.


\section {Finite Networks}
In this section we define our terminology, but restrict attention to networks based on finite graphs.

We define a network  $N$ to be a finite, simple, connected graph  $X$  and a map $c : EX \rightarrow \{1, 2, \dots \}$.

Let $s, t \in VX$.   An {\it $(s, t)$-flow } in $N$ is a map  $f :  EX \rightarrow \{ 0, 1, 2, \dots \}$  together with an assignment of a direction to each edge  $e$ so that its vertices are $\iota e$ and $\tau e$  and the following holds.

\begin {itemize}
\item [(i)]   For each $e \in EX$,  $f(e) \leq c(e)$.

\item [(ii)] If we put $f^+ (v) =  \Sigma  ( f(e) | \iota e = v ) $ and $f^-(v) = \Sigma (f (e) | \tau e = v )$, then for every
$v \in VX, v \not= s, v \not= t$,  we have $f^+(v) = f^-(v)$. 
That is, at every vertex except $s$ or $ t$, the flow into that vertex is the same as the flow out.

\end {itemize}  

\begin{figure}[htbp]
\centering
\begin{tikzpicture}[scale=.8]
\draw [-> , very thick] (0,0) -- ( 3,0) ;
\draw [very thick] (3,0) --(6,0) ;
\filldraw (0,0) circle (2pt);
\filldraw (6,0) circle (2pt);
\draw (0,.2) node [above] {$\iota e$};
\draw (3,.2) node [above] {$e$};
\draw (6,.2) node [above] {$\tau e$};

\end{tikzpicture}
\end{figure}

It is easy to show that in an $(s,t)$-flow,    $f^+(s) - f^-(s)   =  -(f^+(t) - f^-(t))$.   The {\it value} of the flow is defined to be $|f| = |f^+(s) - f^-(s)|$.
We define a {\it cut} in $X$ to be a subset  $A$  of $VX$,  $A\not= \emptyset, A\not= VX$.   If $A$ is a cut then so is its complement $A^*$.
If $N$ is a network and $A \subset VX$ is a cut, then the capacity $c(A)$ of $A$ is the sum $c(A) = \Sigma \{ c(e) | e = (u,v), u \in A. v \in A^*\}$.
We define  $\delta A$ to be the set of edges with one vertex in $A$ and one in $A^*$, so that $c(A)$ is the sum of the values $c(e)$ as $e$ ranges over the edges of $\delta A$.    We could replace each edge $e$  of $X$ with $c(e)$ edges joining the same two vertices and then have a theory
in which the capacity of a cut is the number of edges in $\delta A$.

In Figure \ref{fig:Tutte} a network is shown, together with a max-flow (which has value $7$), together with a corresponding min-cut.

\begin {theo} [The Max-Flow Min-Cut Theorem \cite {[FF1]}]  The maximum value of an  $(s, t)$-flow is the minimal capacity of a cut separating $s$ and $t$.
\end {theo}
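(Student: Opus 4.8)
The plan is to prove the two inequalities separately. For the easy direction --- that $|f| \le c(A)$ for every $(s,t)$-flow $f$ and every cut $A$ with $s \in A$, $t \in A^*$ --- I would sum the conservation law over $A$. Concretely, $|f| = \sum_{v \in A}\big(f^+(v) - f^-(v)\big)$ since every term with $v \ne s$ vanishes; expanding $f^{\pm}$ as sums over incident edges, the contributions of edges with both ends in $A$ cancel, and what survives is a sum over $e \in \delta A$ with sign according to whether $e$ points out of or into $A$. Bounding each $f(e)$ above by $c(e)$ and below by $0$ gives $|f| \le \sum_{e \in \delta A} c(e) = c(A)$. Taking the maximum over flows and the minimum over separating cuts yields $\max |f| \le \min c(A)$.

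For the reverse inequality I would exhibit a single flow and a single cut of the same value, by the augmenting path (Ford--Fulkerson) construction. Fix once and for all a reference orientation of $X$, so that a flow can be recorded as a signed assignment $f(e) \in \{-c(e), \dots, c(e)\}$, a negative value meaning flow against the reference direction; this is the same data as in the definition and makes the bookkeeping of directions uniform. Starting from the zero flow, build the residual digraph on $VX$ in which an arc $u \to v$ is present exactly when the edge $\{u,v\}$ currently carries strictly fewer than $c(\{u,v\})$ units of net flow in the direction towards $v$. If this digraph has a directed path from $s$ to $t$, push one extra unit along it, increasing $f$ on the edges used in their forward sense and decreasing it on those used in reverse; a short case check on the two residual arcs at each interior vertex of the path shows that conservation is preserved, that $-c(e) \le f(e) \le c(e)$ still holds everywhere, and that $|f|$ has increased by exactly $1$. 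Because the capacities are positive integers and $|f|$ can never exceed $c(\{s\})$ by the easy direction, after finitely many augmentations one reaches a flow $f$ whose residual digraph contains no directed path from $s$ to $t$.

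Let $A$ be the set of vertices reachable from $s$ in this final residual digraph. Then $s \in A$ and $t \in A^*$, so $A$ separates $s$ and $t$. For each edge of $\delta A$, the fact that its $A$-endpoint is reachable but its $A^*$-endpoint is not forces the residual arc directed out of $A$ to be absent; hence that edge carries its full capacity $c(e)$ in the direction out of $A$, and no edge carries flow into $A$. Feeding this into the identity obtained in the easy direction gives $|f| = \sum_{e \in \delta A} c(e) = c(A)$, and together with $\max |f| \le \min c(A)$ this is the desired equality.

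I expect the real work to be confined to the middle paragraph: verifying that a single augmentation along a residual path produces another genuine $(s,t)$-flow --- in particular the direction-reassignment that occurs when an edge carrying no flow is used, and the handful of conservation cases at interior vertices --- and stating the termination argument cleanly. The first and last paragraphs are essentially careful summation.
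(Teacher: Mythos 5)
Your proposal is correct and follows essentially the same route as the paper, which (in the course of proving the infinite-network version in Section 3) reproduces the standard Ford--Fulkerson argument: augment along paths with residual capacity until none exists, then take $S_f$ to be the set of vertices reachable from $s$ by such paths and observe that every edge of $\delta S_f$ is saturated, so $|f| = c(S_f)$. Your explicit treatment of the easy direction by summing the conservation law over $A$, and your residual-digraph bookkeeping, are just slightly more formal packagings of the same steps.
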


\begin{figure}[htbp]
\centering
\begin{tikzpicture}[scale=.8]


\path (5,28) coordinate (p1); 
\path (6,31) coordinate (p2);  
\path (3,32) coordinate (p3); 
\path (3,30) coordinate (p4);  
\path (0,31) coordinate (p5);  
\path (1,28) coordinate (p6);  
\path (3,27) coordinate (p7);  

\path (9,26) coordinate (p17);  
\path (9,25) coordinate (nn);  
\path (8,29) coordinate (p18);   
\path (11,30) coordinate (p19);  
\path (12,27) coordinate (p20); 
\path (7,27) coordinate (p21); 
\path (9,31) coordinate (p22); 
\path (13,29) coordinate (p23); 
\path (10.8,25.7) coordinate (p24);
\path (11.5,23.5) coordinate (p25);

\draw [red] (10,30.5) node [above] {2};

\draw [red] (12,29.5) node [above] {5};

\draw [red] (8.5,30) node [left] {8};

\draw [red] (12.25,8) node [left] {3};

\draw [red] (9.5, 29.5) node [above] {5};

\draw [red] (11.5, 28.5) node [right] {1};

\draw [red] (7.7,27.7) node [above] {2};

\draw [red] (8.7,27.5) node [right] {1};

\draw [red] (7.9,26.5) node [above] {8};

\draw [red] (10.1,26.4) node [above] {2};

\draw [red] (6,7.25) node [above] {6};

\draw [red] (4,27) node [above] {5};

\draw [red] (2,27) node [above] {2};

\draw [red] (4,27) node [above] {5};

\draw [red] (2,29) node [below] {1};

\draw [red] (4,29) node [below] {3};

\draw [red] (7.5, 31) node [above] {6} ;

\draw [red] (6, 27) node [above] {6} ;

\draw [red] (3,30.5) node [right] {3};

\draw [red] (3,28) node [above] {1};

\draw [red] (5.5, 29.5) node [right] {1};

\draw [red] (4, 30) node [right] {1};

\draw [red] (1.5,31.5) node [above] {3};

\draw [red] (4.5, 31.5) node [above] {1};

\draw [red] (.5, 29.5) node [left] {6};

\draw [red] (2, 30) node [left] {4};

\filldraw (p2) circle (1pt);
\filldraw (p3) circle (1pt);
\filldraw (p4) circle (1pt);

\filldraw (p5) circle (1pt);
\filldraw (p7) circle (1pt);

\filldraw (p18) circle (1pt);
\filldraw (p19)circle (1pt);
\filldraw (p21) circle (1pt);

\filldraw (p22) circle (1pt);

\filldraw (p23) circle (1pt);

\draw (p1)--(p6) ;
\draw (p1) -- (p2) -- (p3)--(p1)--(p4)--(p3) -- (p5)--(p6)--(p3);
\draw (p4)--(p6) ;
\draw  (p1)--(p7)--(p1) ;
\draw (p7) -- (p6) ;

\draw (p17)--(p18)--(p19)--(p20)--(p17);
\draw (p1)--(p17) ;
\draw (p20)--(p23)--(p22)--(p22)--(p21);
\draw (p2) -- (p22) ;

\filldraw (p1) circle (1pt);
\filldraw (p6) circle (1pt);
\filldraw (p17)  circle (1pt);
\filldraw (p20) circle (1pt);

\path (5,18) coordinate (p1); 
\path (6,21) coordinate (p2);  
\path (3,22) coordinate (p3); 
\path (3,20) coordinate (p4);  
\path (0,21) coordinate (p5);  
\path (1,18) coordinate (p6);  
\path (3,17) coordinate (p7);  

\path (9,16) coordinate (p17);  
\path (9,15) coordinate (nn);  
\path (8,19) coordinate (p18);   
\path (11,20) coordinate (p19);  
\path (12,17) coordinate (p20); 
\path (7,17) coordinate (p21); 
\path (9,21) coordinate (p22); 
\path (13,19) coordinate (p23); 
\path (10.8,15.7) coordinate (p24);
\path (11.5,13.5) coordinate (p25);

\draw (p1) -- (p2) -- (p3)--(p1)--(p4)--(p3) -- (p5)--(p6)--(p3);

\draw [blue] (12,19.5) node [above] {4};

\draw [blue] (8.5,20) node [left] {2};

\draw [blue] (12.15,8) node [left] {3};

\draw [blue] (9.5, 19.5) node [above] {5};

\draw [blue] (11.5, 18.5) node [right] {1};

\draw [blue] (7.7,17.7) node [above] {2};

\draw [blue] (8.7,17.5) node [right] {1};

\draw [blue] (7.9,16.5) node [above] {3};

\draw [blue] (10.1,16.4) node [above] {2};

\draw [blue] (6,17.5) node [above] {5};

\draw [blue] (4,17) node [above] {2};

\draw [blue] (2,17) node [above] {2};

\draw [blue] (2,19) node [below] {1};

\draw [blue] (4,19) node [below] {2};

\draw [blue] (7.5, 21) node [above] {2} ;

\draw [blue] (3,20.5) node [right] {1};

\draw [blue] (3,18) node [above] {1};

\draw [blue] (5.5, 19.5) node [right] {1};

\draw [blue] (4, 20) node [right] {1};

\draw [blue] (1.5,21.5) node [above] {3};

\draw [blue] (4.5, 21.5) node [above] {1};

\draw [blue] (.5, 19.5) node [left] {4};


\filldraw (p2) circle (1pt);
\filldraw (p3) circle (1pt);
\filldraw (p4) circle (1pt);

\filldraw (p5) circle (1pt);
\filldraw (p7) circle (1pt);

\filldraw (p18) circle (1pt);
\filldraw (p19)circle (1pt);
\filldraw (p21) circle (1pt);

\filldraw (p22) circle (1pt);

\filldraw (p23) circle (1pt);

\draw [->, very thick] (p6)-- (3,18) ;
\draw [ very thick] (3,18) -- (p1) ;
\draw [->, very thick] (p1) --(5.5,19.5) ;
\draw [very thick] (5.5,19.5) --  (p2)  ;
\draw [very thick] (4.5,21.5) -- (p2) ;
\draw [->, very thick] (p3) --(4.5, 21.5) ;
\draw[very thick]  (4,19)--(p1) ;
\draw [->, very thick] (p4)--(4,19);
\draw [->, very thick]  (p5) -- (1.5, 21.5) ;
\draw [very thick] (1.5, 21.5)-- (p3) ;
\draw [->, very thick] (p5) -- (.5, 19.5) ;
\draw [->, very thick] (p3) --( 3, 21) ;
\draw [very thick] (3,21) --(p4)--(2, 19)  ;
\draw [very thick] (.5, 19.5) -- (p6) ;
\draw [very thick]  (p1)--(4,20);
\draw [->, very thick]  (p3)--(4,20);

\draw [->, very thick] (p6)--(2,19) ;
\draw  [->, very thick] (p7)--(4, 17.5) ;
\draw [very thick] (4, 17.5)--(p1) ;

\draw [->, very thick] (p6) --(2, 17.5) ;
\draw [very thick] (2, 17.5) -- (p7) ;

\draw [->, very thick]  (p17)--(8.5, 17.5) ;
\draw [very thick] (8.5, 17.5)--(p18) ;

\draw [->, very thick] (p18) --(9.5,19.5)  ;
\draw [very thick] (9.5, 19.5)--(p19) ;

\draw [very thick] (p20)--(10.5, 16.5);
\draw [->, very thick] (p17)--(10.5, 16.5) ;

\draw [->, very thick] (p1)--(6,17.5) ;
\draw [very thick] (6, 17.5)--(7,17) ;

\draw [->, very thick] (7,17)--(8,16.5) ;

\draw [->, very thick] (7,17)--(7.5,18) ;
\draw [very thick] (7.5,18)--(p18) ;
\draw [very thick] (8,16.5)--(p17) ;

\draw [->, very thick] (p22)--(8.5,20) ;
\draw [very thick] (8.5, 20)--(p18) ;

\draw [ ->, very thick] (p20)--(12.5, 18) ;
\draw [very thick]  (12.5, 18) --(p23) ;
\draw [->, very thick]  (p19)-- (12, 19.5) ;
\draw [very thick] (12,19.5)--(p23) ;

\draw [->, very thick] (p19)--(11.5,18.5) ;
\draw [very thick] (11.5, 18.5)--(p20) ;

\draw [->, very thick] (p2) --(7.5, 21) ;
\draw [very thick ] (7.5,21)-- (p22) ;
\draw (p22) --(p19)  ;

\filldraw (p1) circle (1pt);
\filldraw (p6) circle (1pt);
\filldraw (p17)  circle (1pt);
\filldraw (p20) circle (1pt);

\draw (p5) node [left] {{}\hskip-5mm s};

\draw (p23) node [right] {t};



\path (5,8) coordinate (p1); 
\path (6,11) coordinate (p2);  
\path (3,12) coordinate (p3); 
\path (3,10) coordinate (p4);  
\path (0,11) coordinate (p5);  
\path (1,8) coordinate (p6);  
\path (3,7) coordinate (p7);  

\path (9,6) coordinate (p17);  
\path (9,5) coordinate (nn);  
\path (8,9) coordinate (p18);   
\path (11,10) coordinate (p19);  
\path (12,7) coordinate (p20); 
\path (7,7) coordinate (p21); 
\path (9,11) coordinate (p22); 
\path (13,9) coordinate (p23); 
\path (10.8,5.7) coordinate (p24);
\path (11.5,3.5) coordinate (p25);

\draw [dashed, red]  (4,13) --(7,8)--(12, 6);

\draw [blue] (12,9.5) node [above] {4};

\draw [blue] (8.5,10) node [left] {2};

\draw [blue] (12.5,8) node [left] {3};

\draw [blue] (9.5, 9.5) node [above] {5};

\draw [blue] (11.5, 8.5) node [right] {1};

\draw [red] (7.7,7.7) node [above] {2};

\draw [red] (8.7,7.5) node [right] {1};

\draw [blue] (7.9,6.5) node [above] {3};

\draw [red] (10.1,6.4) node [above] {2};

\draw [blue] (6,7.5) node [above] {5};

\draw [blue] (4,7) node [above] {2};

\draw [blue] (2,7) node [above] {2};

\draw [blue] (2,9) node [below] {1};

\draw [blue] (4,9) node [below] {2};

\draw [blue] (7.5, 11) node [above] {2} ;

\draw [blue] (3,10.5) node [right] {1};

\draw [blue] (3,8) node [above] {1};

\draw [red] (5.5, 9.5) node [right] {1};

\draw [blue] (4, 10) node [right] {1};

\draw [blue] (1.5,11.5) node [above] {3};

\draw [red] (4.5, 11.5) node [above] {1};

\draw [blue] (.5, 9.5) node [left] {4};


\filldraw (p2) circle (1pt);
\filldraw [red] (p3) circle (2pt);
\filldraw [red] (p4) circle (2pt);

\filldraw [red]  (p5) circle (2pt);
\filldraw [red] (p7) circle (2pt);

\filldraw (p18) circle (1pt);
\filldraw (p19)circle (1pt);
\filldraw [red] (p21) circle (2pt);

\filldraw (p22) circle (1pt);

\filldraw (p23) circle (1pt);

\draw (p1)--(p6) ;
\draw (p1) -- (p2) -- (p3) -- (p1) ;
\draw (p1)--(p4)--(p3) -- (p5)--(p6)--(p3);

\draw (p4)--(p6) ;

\draw (p7) -- (p6) ;

\draw (p17)--(p18)--(p19)--(p20)--(p17);
\draw (p1)--(p17) ;
\draw (p20)--(p23)--(p22)--(p22)--(p21);
\draw (p2) -- (p22) ;
\draw [red, very thick]   (p4)--(p1)-- (p21) -- (p17) ;
\draw [red, very thick]  (p4)-- (p3) --(p6) -- (p5)  ;
\draw [red, very thick]  (p1)--(p7)  ;
\filldraw [red] (p1) circle (2pt);
\filldraw [red] (p6) circle (2pt);
\filldraw [red] (p17)  circle (2pt);
\filldraw (p20) circle (1pt);

\draw (p5) node [left] {{}\hskip-5mm s};

\draw (p23) node [right] {t};

\end{tikzpicture}
\vskip-2mm\caption{Max-Flow Min-Cut Theorem}\label{fig:Tutte}\vskip-3mm \end{figure}
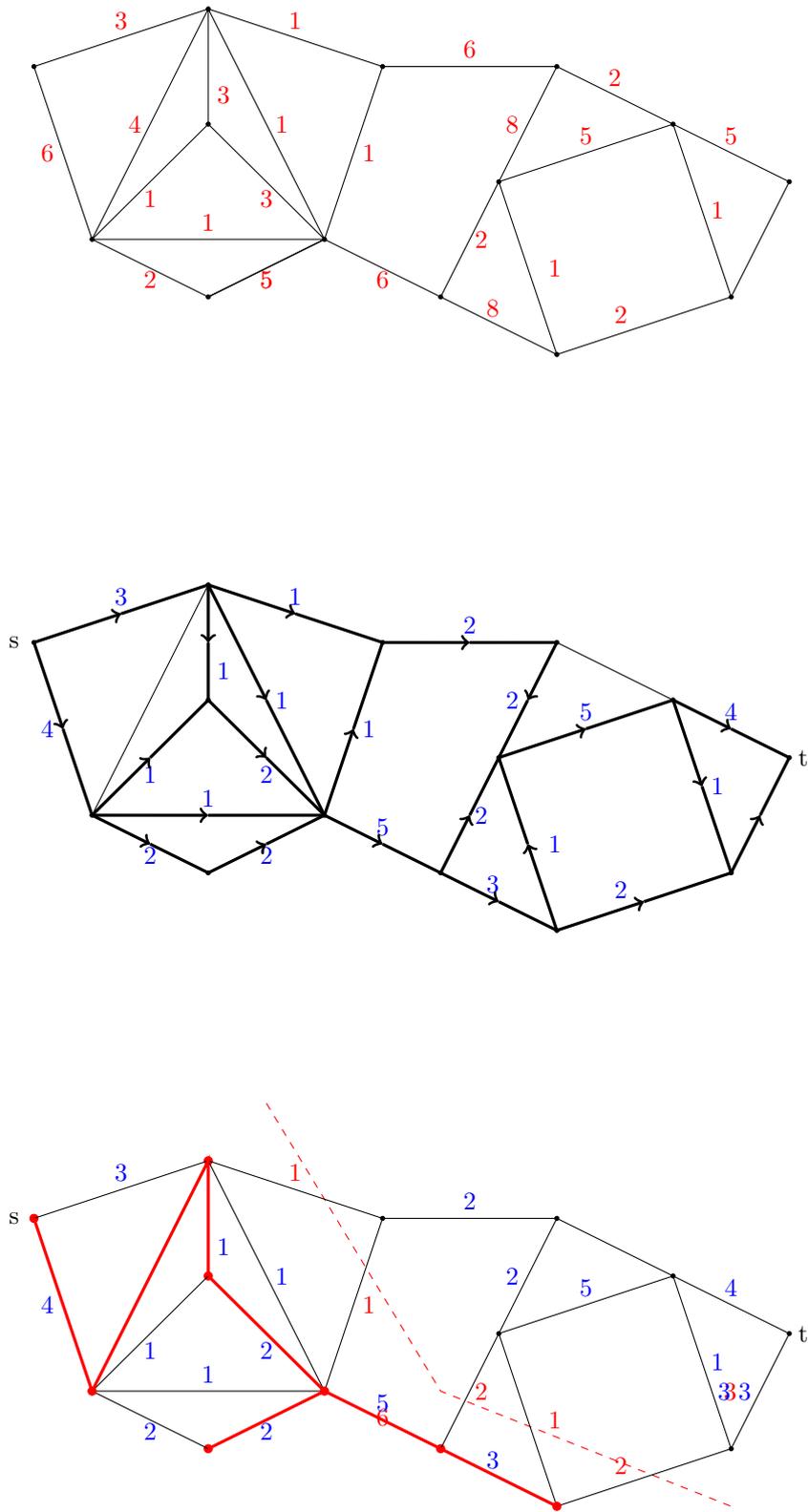
In the proof of this result it is shown that one obtains a min-cut from a max-flow as the set of vertices that are connected to $s$ by a path in which each edge has some unused capacity.    Thus in Figure \ref {fig:Tutte}  the  min-cut  vertices are shown in red and the  edges with unused capacity used in the construction of the max-flow are also shown
in red.

In this paper it is shown that for any network there is a uniquely determined network based on a {\it structure tree} that provides a convenient way of encoding the minimal flow between
any pair of vertices.  Specifically we will prove the following theorem.

\begin {theo}\label {maintheorem}  Let $N(X)$ be a network.    There is a uniquely determined network $N(T)$ based on a tree  $T$ and an injective map $\nu : VX \rightarrow VT$, such
that  the maximum value of an $(s,t)$-flow in $X$ is the maximum value of a $(\nu s, \nu t)$-flow in $N(T)$.   Also,  for any edge $e' \in ET$, there are vertices
$s, t \in VX$ such that $e'$ is on the geodesic joining $\nu s$ and $\nu t$ and $c(e')$ is the capacity of a minimal $(s,t)$-cut.

\end {theo}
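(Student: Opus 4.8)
The plan is to build the tree $T$ from a canonical nested family of cuts in $X$, mimicking the finite‑network structure‑tree construction of \cite{[DD]} but keeping track of capacities. First I would fix attention on the collection $\mathcal{C}$ of all \emph{thin} cuts of $X$ — informally, the cuts $A$ whose capacity $c(A)$ is minimal among cuts that separate $A$ from $A^*$ in the appropriate sense (this is where Lemma~\ref{thin} is invoked, identifying the \cite{[DD]} notion of thin with the Diekert–Weiss one). The key structural fact, which I would quote from \cite{[DD]}, is that $\mathcal{C}$ contains a sub‑family $\mathcal{E}$ that is \emph{nested} (every pair of cuts has an empty corner), is invariant under $\operatorname{Aut}(X)$, and is rich enough that for every pair $s,t$ the minimal $(s,t)$‑cut capacity is realised by some cut in $\mathcal{E}$. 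A nested $\operatorname{Aut}$‑invariant set of cuts is precisely the edge set of a tree $T$: the vertices of $T$ are the equivalence classes under $A\sim B$ iff neither $A\subsetneq B$ nor $B\subsetneq A$ "strictly with something between", and $\nu$ sends a vertex $v\in VX$ to the class of all cuts $A$ with $v\in A$. Injectivity of $\nu$ follows because distinct vertices are separated by some cut (the graph is connected with finite cuts around single vertices, so $\{v\}$‑type cuts, or rather the cuts in $\mathcal E$ separating $v$ from $w$, distinguish them).

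Next I would define the network structure on $T$. Each edge $e'\in ET$ corresponds to a cut $A\in\mathcal{E}$ (and its complement $A^*$, which is the same unoriented edge); I set $c(e')=c(A)=|\delta A|$. The two defining properties then need to be checked. For the "for any edge $e'$" clause: given $e'\leftrightarrow A$, I would use that $A\in\mathcal E\subseteq\mathcal C$ is thin, hence there exist $s\in A$, $t\in A^*$ for which $A$ is a minimal $(s,t)$‑cut; and because $\mathcal E$ is nested, the cuts of $\mathcal E$ separating $s$ from $t$ are linearly ordered and correspond exactly to the edges of $T$ on the geodesic $[\nu s,\nu t]$, so $e'$ lies on that geodesic and carries the right capacity. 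For the max‑flow clause, I would first show the geodesic $[\nu s,\nu t]$ in $T$ has a minimum‑capacity edge $e'\leftrightarrow A$ with $c(e')=$ (min $(s,t)$‑cut capacity in $X$): "$\ge$" is immediate since $A$ separates $s,t$ in $X$; "$\le$" because any $(s,t)$‑cut $B$ in $X$ of minimal capacity can be taken in $\mathcal E$ (by the richness of $\mathcal E$), hence is one of the geodesic edges, hence has capacity $\ge c(e')$. Then the finite MFMC (Theorem, \cite{[FF1]}) applied along the path in $T$ — or rather the trivial fact that in a tree the min $(\nu s,\nu t)$‑cut is the minimum edge capacity on the geodesic — gives that the max $(\nu s,\nu t)$‑flow in $N(T)$ equals $c(e')$, which equals the max $(s,t)$‑flow in $X$ by the finite MFMC again (noting $X$ may be infinite, so strictly one needs the Section 3 generalisation of MFMC to arbitrary networks, but for a \emph{finite} minimal cut the flow‑value equality still holds and is part of the Section 3 machinery).

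Uniqueness of $N(T)$ is the remaining point: I would argue that any tree‑network with these two properties forces its edge set to be (in bijection with) exactly the canonical nested thin family $\mathcal E$ — the edges must be thin cuts (first property) and must suffice to compute all minimal separations (max‑flow property), and the \cite{[DD]} construction shows the minimal such family is unique up to the natural identification. The main obstacle I anticipate is \emph{not} the tree combinatorics, which is essentially \cite{[DD]}, but rather controlling the passage between "thin cut" and "minimal $(s,t)$‑cut for some $s,t$": one must be sure that every cut appearing as a geodesic edge genuinely arises as a minimal cut for a suitable vertex pair (not merely an end pair), and that the nested family can be chosen to realise \emph{all} finite minimal vertex separations simultaneously — this optimality/completeness of $\mathcal E$ is the technical heart, and in the infinite case it is exactly where accessibility issues (mentioned in the introduction) could in principle intrude, though for the statement as given — a single tree $T$, with the weaker "there exist $s,t$" quantifier on edges — one canonical $T$ suffices and accessibility is not needed.
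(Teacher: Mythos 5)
Your overall strategy -- build a canonical nested family $\mathcal E$ of thin cuts, take $T=T(\mathcal E)$, give each edge the capacity of its cut, and read off min-cuts from geodesics -- is exactly the paper's strategy, and your verification of the two displayed properties from the existence of such an $\mathcal E$ is fine. But the step you outsource to \cite{[DD]} is precisely what the paper's proof actually consists of, and the citation does not cover it. What is needed is a nested, automorphism-invariant, \emph{capacity-graded} family $\ce_n$ of thin generators of $\B_n$ (the subring generated by cuts of capacity at most $n$), built with no arbitrary choices; \cite{[DD]} Ch.~II gives nested generators for $\B X$ of a graph (unit capacities), not for a general network, and in any case the canonicity is the content, not a formality. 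The paper supplies it by induction on $n$: the submodularity computation on the corners of crossing cuts (Figure \ref{Cuts}) showing that corners of crossing thin cuts are again thin or lie in $\B_{n-1}$, the descent argument on $\mu(A,\ce_{n-1})$ (Lemma \ref{corners_equality}) showing $\B_n$ is generated by $\ce_{n-1}$ together with the cuts nested with all of $\ce_{n-1}$, and then the selection, for each vertex $u$, of the \emph{smallest} cut $A_u\in\C_n'$ containing $u$ (shown to exist via the corner argument, and shown to be pairwise nested). You correctly identify this as ``the technical heart'' but do not supply it, so as written the proposal is a proof outline resting on an unproved (and not-quite-citable) main lemma.

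Your uniqueness argument also attacks the wrong target. You propose to show that any tree-network satisfying the two displayed properties must have edge set $\mathcal E$; this is false, and the paper's own example shows it: the vertex $z$ of the structure tree is not in the image of $\nu$, and contracting any one of the four capacity-$12$ edges at $z$ yields a different tree that still admits an injective $\nu$, still computes all max-flows (every geodesic through $z$ retains a capacity-$12$ edge), and whose every edge is still a minimal cut for some pair -- these are the (non-unique) Gomory--Hu trees. ``Uniquely determined'' in the theorem means the construction is canonical, i.e.\ involves no choices (and hence is $\operatorname{Aut}(N)$-invariant); that is delivered by the smallest-$A_u$ selection above, not by a characterisation via the two stated properties. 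A minor further point: your worry about $X$ being infinite is moot here, since Section 2 defines a network on a finite graph; the infinite case is Theorem \ref{maintheoremB}.
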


An example of a network and its structure tree are shown in Figure \ref {fig:Tree} .      Thus in this network the max-flow between $u$ and $p$ is $12$.
One can read off a corresponding min-cut by removing the corresponding edge from the  structure tree.  Thus a min-cut separating $u$ and $p$ is
 $\{q,  r, s, t, u, v, w  \} $.    The map $\nu $ need not be surjective.    In our example there is a single vertex  $z$  that is not in the image
 of $\nu $ shown in bold.     One can get a structure tree for which $\nu $ is bijective by contracting one of the four edges incident with this vertex.
 The tree then obtained is a Gomory-Hu tree \cite {GH}.
The structure tree constructed in the proof of Theorem \ref {maintheorem} is uniquely determined and is therefore invariant under the automrophism
group of the network.    The tree obtained by contracting one of the four edges is no longer uniquely determined as one gets a different
tree for each of the four choices.
 In some cases this would mean that the structure tree  did not admit
 the automorphism group of the network.   Thus for example if the automorphism group of $X$ is transitive on $VX$ and $c(e) = 1$ for every edge,
 then the structure tree would have $n$ vertices of degree one, where $n = |VX|$ and one vertex of degree $n$.  Clearly this structure tree will admit
 the automorphism group of $X$, but if one edge is contracted to get a tree with $n$ vertices, then the new tree will not admit the automorphism group.
 
 Not every min-cut separating a pair of vertices can be obtained from the structure tree.   The min-cuts obtained are the ones that are optimally nested with the cuts of equal or smaller capacity.    In our example there are four cuts of capacity $12$ corresponding to edges in the structure tree incident with $z$.   However there are other cuts of capacity $12$.    Thus there are two min-cuts in the structure tree separating $k$ and $h$,
 but there are in fact four min-cuts separating $k$ and $h$.    In  \cite {[DKL]}  it is shown that the min-cuts separating two vertices correspond to the edge cuts 
 in a cactus, which is a connected graph in which each edge belongs to at most one cycle.
 The cactus of min-cuts separating $k$ and $h$ is a $4$-cycle.

\begin{figure}[htbp]
\centering
\begin{tikzpicture}[scale=.7]
\
\path (5,8) coordinate (p1); 
\path (6,11) coordinate (p2);  
\path (3,12) coordinate (p3); 
\path (3,10) coordinate (p4);  
\path (0,11) coordinate (p5);  
\path (1,8) coordinate (p6);  
\path (3,7) coordinate (p7);  
\path (3,4) coordinate (p8);  
\path (1,4) coordinate (p9);  
\path (1,2) coordinate (p10);  
\path (3,2) coordinate (p11);  
\path (5,4) coordinate (p12);   
\path (5,2) coordinate (p13);   
\path (5,0) coordinate (p14);   
\path (7,2) coordinate (p15);   
\path (7,4) coordinate (p16);   
\path (9,6) coordinate (p17);  
\path (9,5) coordinate (nn);  
\path (8,9) coordinate (p18);   
\path (11,10) coordinate (p19);  
\path (12,7) coordinate (p20); 
\path (7,7) coordinate (p21); 
\path (9,11) coordinate (p22); 
\path (13,9) coordinate (p23); 
\path (10.8,5.7) coordinate (p24);
\path (11.5,3.5) coordinate (p25);

\draw [red] (10,10.5) node [above] {2};

\draw [red] (12,9.5) node [above] {1};

\draw [red] (8.5,10) node [left] {8};

\draw [red] (12.5,8) node [left] {3};

\draw [red] (9.5, 9.5) node [above] {2};

\draw [red] (11.5, 8.5) node [right] {1};

\draw [red] (7.7,7.7) node [above] {2};

\draw [red] (8.7,7.5) node [right] {1};

\draw [red] (7.9,6.5) node [above] {8};

\draw [red] (10.1,6.4) node [above] {3};

\draw [red] (7.5,4.5 ) node [above] {2};

\draw [red] (8,4) node [right] {1};

\draw [red] (6,1) node [above] {8};

\draw [red] (7,3) node [right] {3};

\draw [red] (7,5 ) node [above] {3};

\draw [red] (6,4) node [above] {1};

\draw [red] (6,2) node [above] {4};

\draw [red] (5,3) node [right] {7};

\draw [red] (5,1) node [right] {1};

\draw [red] (6,7.5) node [above] {6};

\draw [red] (4,3) node [above] {1};

\draw [red] (4,2) node [above] {4};

\draw [red] (4,1) node [above] {7};

\draw [red] (4,7) node [above] {5};

\draw [red] (2,3) node [right] {5};

\draw [red] (2,2) node [above] {1};


\draw [red] (2,7) node [above] {2};

\draw [red] (4,7) node [above] {5};



\draw [red] (2,9) node [below] {1};

\draw [red] (4,9) node [below] {2};

\draw [red] (1, 6) node [left] {6};

\draw [red] (1,3) node [left] {4};

\draw [red] (3,10.5) node [right] {3};

\draw [red] (3,8) node [above] {1};

\draw [red] (5.5, 9.5) node [right] {6};

\draw [red] (4, 10) node [right] {1};

\draw [red] (1.5,11.5) node [above] {3};

\draw [red] (4.5, 11.5) node [above] {1};

\draw [red] (.5, 9.5) node [left] {6};

\draw [red] (2, 10) node [left] {4};

\filldraw (p2) circle (1pt);
\filldraw (p3) circle (1pt);
\filldraw (p4) circle (1pt);
\filldraw (p5) circle (1pt);
\filldraw (p7) circle (1pt);
\filldraw (p9) circle (1pt);
\filldraw (p10) circle (1pt);
\filldraw (p11) circle (1pt);
\filldraw (p12) circle (1pt);
\filldraw (p13) circle (1pt);
\filldraw (p14) circle (1pt);
\filldraw (p15) circle (1pt);
\filldraw (p16) circle (1pt);
\filldraw (p18) circle (1pt);
\filldraw (p19)circle (1pt);
\filldraw (p21) circle (1pt);
\filldraw (p22) circle (1pt);
\filldraw (p23) circle (1pt);

\draw (p1)--(p6) ;
\draw (p1) -- (p2) -- (p3)--(p1)--(p4)--(p3) -- (p5)--(p6)--(p3);
\draw (p4)--(p6) ;
\draw  (p1)--(p7)--(p1) ;
\draw (p7) -- (p6) -- (p9)--(p10)--(p11)--(p9);
\draw (p11)--(p15) ;-
\draw (p12)--(p14)--(p11)--(p12);
\draw (p14)--(p15)--(p16)--(p12)--(p17)--(p16);
\draw (p15)--(p17)--(p18)--(p19)--(p20)--(p17);
\draw (p1)--(p17) ;
\draw (p20)--(p23)--(p22)--(p22)--(p21);

\filldraw (p1) circle (1pt);
\filldraw (p6) circle (1pt);
\filldraw (p17)  circle (1pt);
\filldraw (p20) circle (1pt);

\draw (p1) node [right] {\hskip1.5mm a};
\draw (p2) node [right] {\hskip0.6mm b};
\draw (p3) node [above] {c};
\draw (p4) node [below] { d};
\draw (p7) node [below] {\hskip2.5mm e};
\draw (p5) node [left] {{}\hskip-5mm g};
\draw (p6) node [left] {h};
\draw (p9) node [left] {i};
\draw (p10) node [left] { j};
\draw (p11) node [below] {\hskip-2mm k};
\draw (p12) node [above] {\hskip-2mm l};
\draw (p13) node [above] {\hskip-5mm m};
\draw (p14) node [below] {n};
\draw (p16) node [below] {\hskip-4.4mm o};
\draw (p15) node [below] {\hskip2mm p};
\draw (nn) node [above] {\hskip0.8mm q};
\draw (p18) node [left] {r};
\draw (p19) node [above] {\hskip2mm s};
\draw (p20) node [right] {t};
\draw (p21) node [below] {\hskip-1mm u};
\draw (p22) node [above] {v};
\draw (p23) node [right] {w};

\end{tikzpicture}
\end{figure}

\begin{figure}[htbp]
\centering
\begin{tikzpicture}[scale=.7]
\

\path (5,8) coordinate (p1); 
\path (6,11) coordinate (p2);  
\path (3,12) coordinate (p3); 
\path (3,10) coordinate (p4);  
\path (0,11) coordinate (p5);  
\path (1,8) coordinate (p6);  
\path (3,7) coordinate (p7);  
\path (4,5) coordinate (p8);  
\path (1,4) coordinate (p9);  
\path (1,2) coordinate (p10);  
\path (3,2) coordinate (p11);  
\path (5,4) coordinate (p12);   
\path (5,3) coordinate (p13);   
\path (5,0) coordinate (p14);   
\path (6.5,2) coordinate (p15);   
\path (6.5,3) coordinate (p16);   
\path (9,6) coordinate (p17);  
\path (9,5) coordinate (nn);  
\path (8,9) coordinate (p18);   
\path (11,10) coordinate (p19);  
\path (12,7) coordinate (p20); 
\path (7,7) coordinate (p21); 
\path (9,11) coordinate (p22); 
\path (13,9) coordinate (p23); 
\path (10.8,5.7) coordinate (p24);
\path (11.5,3.5) coordinate (p25);



\draw [red] (8.5,10) node [left] {10};

\draw [red] (12.5,8) node [left] {4};

\draw [red] (9.5, 9.5) node [above] {6};



\draw [red] (8.5,7.5) node [right] {5};

\draw [red] (7.9,6.5) node [above] {16};

\draw [red] (10.1,6.4) node [above] {5};

\draw [red] (6.8,5.5 ) node [above] {12};




\draw [red] (4,3.8 ) node {12};


\draw [red] (5,2) node [below] {16};

\draw [red] (5,3.5) node [right] {12};

\draw [red] (4,1) node [right] {16};


\draw [red] (5.5,2.5) node [left] {6};

\draw [red] (4,2.5) node [above] {14};


\draw [red] (4,7) node [above] {11};

\draw [red] (2.4,4.2) node {12};


\draw [red] (2,9) node [below] {6};


\draw [red] (3, 6) node [left] {12};

\draw [red] (1,3) node [left] {5};


\draw [red] (3,8) node [above] {10};

\draw [red] (5.5, 9.5) node [right] {7};




\draw [red] (.5, 9.5) node [left] {9};

\draw [red] (2, 10) node [left] {12};

\filldraw (p2) circle (1pt);
\filldraw (p3) circle (1pt);
\filldraw (p4) circle (1pt);
\filldraw (p5) circle (1pt);
\filldraw (p7) circle (1pt);
\filldraw (p8) circle (3pt);
\filldraw (p9) circle (1pt);
\filldraw (p10) circle (1pt);
\filldraw (p11) circle (1pt);
\filldraw (p12) circle (1pt);
\filldraw (p13) circle (1pt);
\filldraw (p14) circle (1pt);
\filldraw (p15) circle (1pt);
\filldraw (p16) circle (1pt);
\filldraw (p18) circle (1pt);
\filldraw (p19)circle (1pt);
\filldraw (p21) circle (1pt);
\filldraw (p22) circle (1pt);
\filldraw (p23) circle (1pt);

\draw (p1)--(p6);
\draw (p1) -- (p2) ;
\draw (p5)--(p6)--(p3);
\draw (p4)--(p6) ;
\draw  (p1)--(p7);
\draw (p6)--(p8)--(p9)--(p10)  ;
\draw (p11)--(p13);
\draw (p11)--(p15) ;-
\draw (p12)--(p13);
\draw (p17)--(p8)--(p11) ;
\draw (p14)--(p11) ;
\draw (p17)--(p18)--(p19) ;
\draw (p20)--(p17);
\draw (p21)--(p17) ;
\draw (p20)--(p23) ;
\draw (p22)--(p18) ;
\draw (p16)--(p11);

\filldraw (p1) circle (1pt);
\filldraw (p6) circle (1pt);
\filldraw (p17)  circle (1pt);
\filldraw (p20) circle (1pt);

\draw (p1) node [right] {\hskip1.5mm a};
\draw (p2) node [right] {\hskip0.6mm b};
\draw (p3) node [above] {c};
\draw (p4) node [below] { d};
\draw (p7) node [below] {\hskip2.5mm e};
\draw (p8) node [above] {\hskip4mm z};
\draw (p5) node [left] {{}\hskip-5mm g};
\draw (p6) node [left] {h};
\draw (p9) node [left] {i};
\draw (p10) node [left] { j};
\draw (p11) node [below] {\hskip-2mm k};
\draw (p12) node [above] {\hskip-2mm l};
\draw (p13) node [above] {\hskip-5mm m};
\draw (p14) node [below] {n};
\draw (p16) node [left,above] { o};
\draw (p15) node [below] {\hskip2mm p};
\draw (p17) node [below] {\hskip0.8mm q};
\draw (p18) node [left] {r};
\draw (p19) node [above] {\hskip2mm s};
\draw (p20) node [right] {t};
\draw (p21) node [below] {\hskip-1mm u};
\draw (p22) node [above] {v};
\draw (p23) node [right] {w};

\end{tikzpicture}
\vskip-2mm\caption{Network and structure tree}\label{fig:Tree}\vskip-3mm
\end{figure}
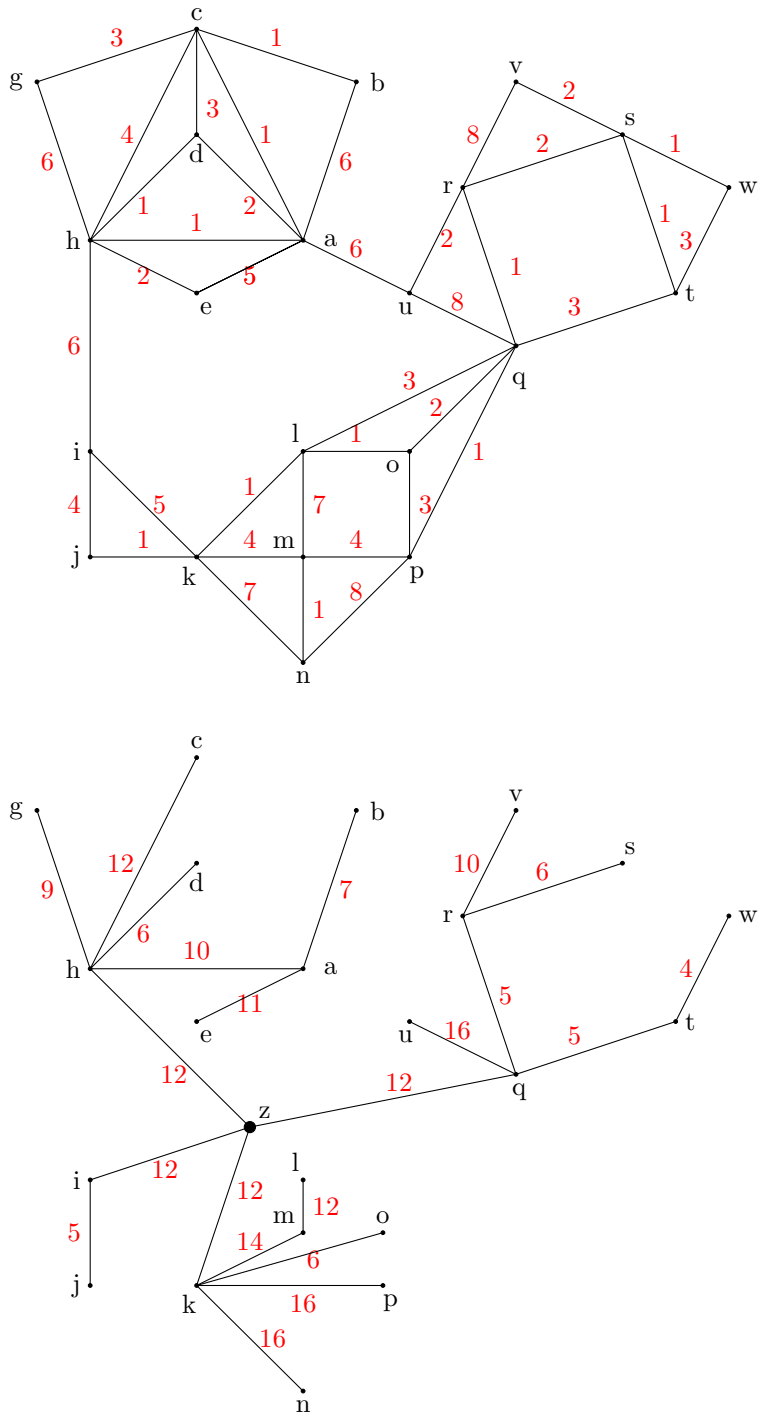

\begin {proof}[Proof of Theorem \ref {maintheorem}]
Let $N$ be a network based on the graph $X$.
Let $\B X$ denote the set of all cuts in $X$.   It is not hard to show that $\B X$ is a Boolean ring invariant  under the automorphism group
of $X$.  Let $\B _n$ be the subring generated by those $A \in \B X$ for which $c(A)   \leq n$.

A cut $A$ is defined to be {\it $n$-thin}   if  $c(A) = n$  but $A \notin \B _{n-1}$.    We say $A$ is thin if it is $n$-thin for some $n$.
Alternatively a   cut  $A$  is  defined to be  {\it thin } (or thin  with respect to $u,v \in VX$) if it separates some $u,v  \in VX$ and $c(A)$ is minimal 
 among all the cuts that separate $u$ and $v $.  These two definitions coincide.
 
 \begin {prop} \label {thin} A  cut $A$ is such that $c(A) =n$ and $A \notin \B_{n-1}$  if and only if there are vertices $u, v \in VX$  with respect to which $A$ is thin.
 \end {prop}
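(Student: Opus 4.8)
The plan is to prove the proposition entirely inside the Boolean ring $\B X$ (which, $X$ being finite, is the power set of $VX$ with symmetric difference $\triangle$ and intersection as ring operations, zero $\emptyset$ and identity $VX$), reducing both directions to elementary facts about finite Boolean subalgebras; in particular, no flows and no submodularity of $c$ will be needed. The basic tool is, for each vertex $w\in VX$, the evaluation map $\mathrm{ev}_w\colon\B X\to\Z/2\Z$ with $\mathrm{ev}_w(C)=1$ if $w\in C$ and $0$ otherwise. This is a ring homomorphism ($\mathrm{ev}_w(C\triangle D)=\mathrm{ev}_w(C)+\mathrm{ev}_w(D)$, $\mathrm{ev}_w(C\cap D)=\mathrm{ev}_w(C)\,\mathrm{ev}_w(D)$, $\mathrm{ev}_w(\emptyset)=0$, $\mathrm{ev}_w(VX)=1$), and the dictionary is: a cut $C$ separates $u$ and $v$ iff $\mathrm{ev}_u(C)\neq\mathrm{ev}_v(C)$. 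Note also that $\emptyset$ and $VX$ have capacity $0$, so they are among the generators of $\B_{n-1}$ whenever $n\ge 1$ (and every cut has capacity at least $1$ since $X$ is connected with positive capacities), so $\B_{n-1}$ contains $\emptyset$ and $VX$.

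For the implication from thinness to membership I would argue as follows. Suppose $A$ is thin with respect to $u,v$, so $A$ separates $u$ and $v$ and $c(A)$ is the least capacity of a cut separating them; call this value $n$. Then no cut of capacity at most $n-1$ separates $u$ and $v$, which says exactly that $\mathrm{ev}_u$ and $\mathrm{ev}_v$ agree on every generator of $\B_{n-1}$ (and on $\emptyset$ and $VX$). The set of elements of $\B X$ on which two ring homomorphisms agree is a subring, so it contains the subring generated by those generators, namely all of $\B_{n-1}$; hence no element of $\B_{n-1}$ separates $u$ and $v$. Since $A$ does separate them, $A\notin\B_{n-1}$, and $c(A)=n$ by the choice of $n$.

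For the converse I would use that $\B_{n-1}$ is a finite Boolean subalgebra of the power set of $VX$. For $w\in VX$ set $\alpha(w):=\bigcap\{C\in\B_{n-1}:w\in C\}$; this is a finite intersection lying in $\B_{n-1}$ and containing $w$, and a short check shows that each $\alpha(w)$ is an atom of $\B_{n-1}$, that the $\alpha(w)$ partition $VX$, and that $\alpha(w)=\alpha(w')$ precisely when no element of $\B_{n-1}$ separates $w$ and $w'$. The standard description of a finite Boolean subalgebra then gives that $\B_{n-1}$ is exactly the set of unions of its atoms: if $w\in B\in\B_{n-1}$ then $\alpha(w)\subseteq B$, and conversely every union of atoms lies in $\B_{n-1}$. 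Now if $c(A)=n$ and $A\notin\B_{n-1}$, then $A$ is not a union of atoms, so (since the atoms partition $VX$) some atom $\alpha(w)$ meets both $A$ and $A^*$. Choosing $u\in\alpha(w)\cap A$ and $v\in\alpha(w)\cap A^*$, the cut $A$ separates $u$ and $v$, while no element of $\B_{n-1}$ — in particular no cut of capacity at most $n-1$ — separates them; hence $c(A)=n$ is the least capacity of a cut separating $u$ and $v$, i.e. $A$ is thin with respect to $u,v$.

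The argument is short, so there is no single hard computation; the point to get right — common to both halves — is the translation between the ring-theoretic condition ($A\notin\B_{n-1}$) and the combinatorial one (a cut separating some prescribed pair of vertices). This runs through the observation that ``not separated by any element of $\B_{n-1}$'' is an equivalence relation whose classes are exactly the atoms of $\B_{n-1}$, which in turn uses finiteness of $VX$ (so that $\B_{n-1}$ is generated as a Boolean algebra by its atoms) together with the fact that $\B_{n-1}$ is literally a subring of $\B X$ generated by the low-capacity cuts. It is perhaps worth flagging that this is all that is required: the coincidence of the two notions of thinness is a purely Boolean-ring phenomenon and needs no input from the Max-Flow Min-Cut Theorem.
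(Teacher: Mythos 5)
Your proof is correct and is essentially the paper's own argument in different clothing: the paper's contradiction step $A=\bigcup_{u\in A}\bigcap_{v\in A^*}B(u,v)$ is precisely your observation that if every pair straddling $A$ were separated by a cut of capacity less than $n$ then $A$ would be a union of atoms of $\B_{n-1}$, and the paper's disjunctive-normal-form step is your observation that two evaluation homomorphisms agreeing on the generators agree on the whole subring they generate. The only quibble is the remark that $\emptyset$ and $VX$ are ``among the generators'': the paper's definition of a cut excludes them, but this is harmless since $\emptyset$ lies in every subring and $VX=B\mathbin{\triangle}B^*$ for any generator $B$, the generating set being closed under complementation because $c(B)=c(B^*)$.
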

 \begin {proof}   
 
 Suppose $c(A) = n$ and $A\notin \B _{n-1}$.    Let $u \in A , v \in A^*$, and suppose  there exists a cut $B(u,v)$ such that 
 $c(B(u,v)) < n$ and $u \in B(u,v), v \in B(u,v)^*$.    If $B_u   = \bigcap (B(u,v) | v \in A^*) $, then $A = \bigcup ( B_u | u \in A)$, and so $A\in \B _{n-1}$, which is a contradiction. 
 Thus there are vertices $u,v$ such that $A$ separates $u$ and $v$ but they are not separated by a $B$ with $c(B) < n$.
 Conversely suppose $A$ separates $u, v$.    If $A \in \B _{n-1}$,  then $A$ is a can be written as a finite union of intersections of cuts 
 $B_1, B_2, \dots , B_k$ with $c(B_i) < n, i = 1, 2, \dots k$.   If no $B_i$ separates $u, v$ then neither will $A$.  If $A$ separates $u,v$
 then $u,v$ are separated by some $B_i$.    If  $A$ is $n$-thin with respect to  $u,v$, then $A \notin \B_{n-1}$.
 \end{proof}
 
  Let $\C _n$ be the set of thin cuts $A$ with $c(A)  \leq n$.

If $A, B$ are cuts, then the sets  $A\cap B, A^*\cap B, A^*\cap B, A^*\cap B, A\cap B^*$ are also cuts.  These sets are called the $\it corners $ of
$A, B$.   This term is suggested by Figure \ref {Cuts} .   Two corners are called opposite or adjacent as suggested in  this figure.
We say two cuts  $A, B$ are {\it nested} if  one $A\cap B, A^*\cap B, A^*\cap B, A^*\cap B, A\cap B^*$ is empty.
  A set $\ce $ of cuts is said to be nested if any two elements of $\ce $ are nested.
We consider sets  $\ce $ satisfying the following conditions:-
\begin{itemize}
\item [(i)] If $A \in \ce$, then $A^* \in \ce$.
\item [(ii)]  The set $\ce $ is nested.
\end {itemize}
\begin{theo} If $\ce $ is a set satisfying conditions (i)  and (ii), then there is a tree $T(\ce)$  such that the directed edge set  is $ \ce $. 
\end{theo}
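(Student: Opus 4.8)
The plan is to build $T(\ce)$ by taking its set of directed edges to be $\ce$ itself and its vertices to be the ``consistent orientations'' of $\ce$. Order $\ce$ by inclusion; then $A\mapsto A^*$ is an order-reversing, fixed-point-free involution of $(\ce,\subseteq)$ (a cut never equals its complement), and nestedness of a pair $A,B$ says exactly that at least one of $A\subseteq B$, $B\subseteq A$, $A\subseteq B^*$, $B^*\subseteq A$ holds. Call $v\subseteq\ce$ a \emph{vertex} if (a) $|v\cap\{A,A^*\}|=1$ for every $A\in\ce$, and (b) $v$ is up-closed, i.e.\ $A\in v$, $A\subseteq B$, $B\in\ce$ imply $B\in v$: a vertex records, for each cut, which side of it the vertex lies on, and (b) is the obvious consistency requirement. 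For $A\in\ce$ put $\tau A=\{\,C\in\ce:A\subseteq C\ \text{or}\ C^*\subsetneq A\,\}$ and $\iota A=\tau(A^*)$, and declare the directed edge $A$ to run from $\iota A$ to $\tau A$.

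The first task is to check that this is a graph whose directed edge set is $\ce$. Using the ``empty corner'' dictionary ($A\cap B=\emptyset\Leftrightarrow A\subseteq B^*$, $A^*\cap B^*=\emptyset\Leftrightarrow B^*\subseteq A$, and so on) together with nestedness, one verifies that $\tau A$ satisfies (a) and (b); that $A\in\tau A$ while $A\notin\iota A$, so the endpoints $\iota A\neq\tau A$; and, crucially, that $\tau A$ and $\iota A$ agree outside the single pair $\{A,A^*\}$. Conversely, if vertices $v,w$ differ exactly on the pair $\{A,A^*\}$ (say $A\in v$), an up-closedness argument forces $v=\tau A$ and $w=\iota A$. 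Hence $A\mapsto(\iota A,\tau A)$ is a bijection of $\ce$ onto the directed edge set of the graph, intertwining $*$ with edge reversal, which is the required identification.

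Next, the graph is acyclic. The engine is the observation that along any directed walk consecutive edges form a strictly \emph{decreasing} chain in $\ce$: if the edge $A$ runs $u\to v$ and the next edge $B$ runs $v\to w$ with $B\neq A^*$, then from $B^*\in v=\tau A$ and $A\in v=\tau(B^*)$ the two defining alternatives collapse to $B\subsetneq A$ (the only other possibility, $A\subseteq B^*$ together with $B^*\subseteq A$, would give $B=A^*$). Consequently a reduced closed walk of length $k\ge 1$ would produce $A_1\supsetneq A_2\supsetneq\cdots\supsetneq A_k\supsetneq A_1$, which is impossible; in particular there are no loops and no bigons. So the graph contains no cycle.

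The remaining, and main, difficulty is \emph{connectedness}. For distinct vertices $v,w$ let $D=\{A\in\ce: A\in w,\ A^*\in v\}$. One first checks that $D$ is a chain under inclusion: for $A,B\in D$ the relations $A\subseteq B^*$ or $B^*\subseteq A$ would violate the consistency of $v$ or of $w$, leaving $A\subseteq B$ or $B\subseteq A$. One then travels from $v$ toward $w$ by choosing a $\subseteq$-maximal element $A$ of $D$, replacing $v$ by $(v\setminus\{A^*\})\cup\{A\}$ — again a vertex, now with disagreement set $D\setminus\{A\}$ — and iterating. This yields a path from $v$ to $w$ precisely when $D$ is finite, so the heart of the matter is to show that only finitely many members of $\ce$ can lie strictly between two comparable cuts; once this is established, connectedness together with the acyclicity above shows that $T(\ce)$ is a tree with directed edge set $\ce$.
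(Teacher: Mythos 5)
Your construction is the same as the paper's: your ``consistent orientations'' are exactly the maps $\alpha:\ce\to\Z_2$ with $\alpha(A)+\alpha(A^*)=1$ and the up-closure property that the paper takes as $VT$, and your $\tau A=\{C: A\subseteq C\ \text{or}\ C^*\subsetneq A\}$ is the paper's $\iota A$ (the names of the two endpoints are swapped, which is immaterial). You verify in more detail what the paper compresses into the single assertion that the edges between $\iota A$ and $\tau B$ form the totally ordered interval $\{C: A\subseteq C\subseteq B\}$, and your acyclicity and chain arguments are correct. The one thing you leave hanging --- finiteness of the disagreement set $D$, equivalently that only finitely many members of $\ce$ lie between two comparable cuts --- is not a real obstacle here: this theorem sits in the finite-network section, where $X$ is a finite graph, so $\B X$ and hence $\ce$ and $D$ are finite, and you should simply say so. You are right, though, that this is the genuine ``heart of the matter'' in general: for infinite $\ce$ the statement fails without the finite interval condition, which is exactly the hypothesis the paper adds (via Proposition \ref{tight}) when it reruns this construction for infinite networks in Section 3.
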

\begin {proof}  We define $VT$ to be the set  of maps $\alpha  : \ce \rightarrow \Z _2$ satisfying the following
\begin{itemize}
\item [(a)]  If $\alpha (A) = 1$, then $\alpha (A^*) = 0.$
\item [(b)]  If  $\alpha (A) =1$ and $A\subset B$, then $\alpha  (B) =1$.

\end{itemize}
Put $ET = \ce $ and for  $A \in \ce$, put $\iota A =  \alpha  $ where $\alpha  (B) = 1 $ if $A\subseteq B$ or if  $A^* \subset B$.   Put $\tau A = \iota A^*$.
Then $\iota A $ and $\tau A$ take the same value on every $B$ except if $B = A$ or $B =A^*$.
It is fairly easy to check that $\iota A$ satisfies conditions  (a) and (b).    If $u =\iota A$ and $v = \tau B$, then the directed edges in a path joining
$u$ and $v$ consist of the set $\{ C \in \ce | A \subseteq C \subseteq B\}$.  This set is totally ordered by inclusion and so is the unique geodesic joining $u$ and $v$.  Thus $T$ is a tree.

We can identify  a vertex $v$  of $X$ with a map $v : \B X \rightarrow  \Z _2$.  Thus $v(A) = 1$ if $v \in A$ and $v(A) = 0$ if $v \notin A$.
  Restricting to $\ce $ will give a vertex of $T$.  Thus there is a map $\nu :  VX \rightarrow VT$ such that $\nu (\alpha ) , \nu (\beta )$ differ only on the cuts separating $\alpha $ and $\beta $.

\end{proof}
  
  Note that there may be vertices of $T$ which are not in the image
of $\nu $.   

If $\ce \subset \B X$ satisfies the above conditions, then there is a tree $T(\ce )$.   If $G$ is the automorphism group of $X$ and $\ce $ is a $G$-set, then $T$ is called
a {\it structure tree} for $X$.    If $T = T(\ce )$ is a structure tree for $X$,  then there is a $G$-map $\nu : VX \rightarrow VT$

Our proof of  Theorem \ref{maintheorem} is  by finite induction.   We show that there is  a nested $G$-set $\ce _n$  of thin cuts that generates $\B _n$,
and $\ce _{n-1} \subseteq \ce _{n}$ for each $n$.

\begin {lemma}\label {corner}   Let  $A, B, C$  be cuts .    
\begin {itemize}
\item [(i)] Let $A, B$ be not nested  and let $C$ be nested with both $A$ and $B$, then $C$ is nested with every
corner of $A, B$.  
\item [(ii)]If $C$ is nested with $A$, then $C$ is nested with two adjacent corners of $A$ and $B$.
\end {itemize}
\end {lemma}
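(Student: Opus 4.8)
The plan is to reduce both parts to elementary inclusions among $A,A^{*},B,B^{*},C,C^{*}$, using the fact that the relation ``$C$ is nested with $D$'' is unchanged when either $C$ or $D$ is replaced by its complement. The one piece of preparation I would record is a reformulation of the definition: for cuts $C$ and $D$, $C$ is nested with $D$ if and only if one of $C,C^{*}$ is contained in one of $D,D^{*}$, since each of the four conditions ``a corner of $C,D$ is empty'' is exactly such an inclusion. I would also note that when $A$ and $B$ are \emph{not} nested each of the four corners of $A,B$ is a genuine cut: none is empty by hypothesis, and none can equal $VX$ (e.g. $A\cap B=VX$ forces $A=B=VX$). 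Finally, I would use the trivial fact that distinct corners of $A,B$ are pairwise disjoint (any two differ in the choice of $A$ versus $A^{*}$ or of $B$ versus $B^{*}$).

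Part (ii) then comes out immediately. Assume $C$ is nested with $A$; complementing $C$ and/or $A$ if necessary, we may assume $C\subseteq A$. Then $C$ is disjoint from both $A^{*}\cap B$ and $A^{*}\cap B^{*}$, as each of these lies in $A^{*}$; hence $C$ is nested with each of these two corners, and they are adjacent, sharing the side $A^{*}$. (If one of them is empty — which can only happen when $A$ and $B$ are nested — the assertion is trivially true for that corner, which meets $C$ in $\emptyset$.)

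For part (i), normalise again so that $C\subseteq A$: the three remaining possibilities for ``$C$ nested with $A$'' are brought to this one by complementing $A$ and/or $C$, an operation preserving the hypotheses and the conclusion (complementing $A$ merely permutes the four corners of $A,B$; complementing $C$ fixes them, and nestedness of $C$ with a cut is insensitive to complementing either argument). Now invoke that $C$ is also nested with $B$: by the reformulation, one of $C,C^{*}$ lies inside one of $B,B^{*}$. If it is $C$ that lies in $B$ or in $B^{*}$, then $C$ is contained in a single corner $K$ of $A,B$; since distinct corners are disjoint, $C$ is disjoint from the other three corners, and $C\subseteq K$ shows $C$ is nested with $K$ as well, so $C$ is nested with all four corners and we are done. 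If instead $C^{*}$ lies in $B$ or in $B^{*}$, then combining with $A^{*}\subseteq C^{*}$ (which is just $C\subseteq A$) yields $A^{*}\subseteq B$ or $A^{*}\subseteq B^{*}$, i.e. $A^{*}\cap B^{*}=\emptyset$ or $A^{*}\cap B=\emptyset$ — an empty corner of $A,B$, contradicting the hypothesis that $A$ and $B$ are not nested. Hence this case cannot occur, and part (i) follows.

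I do not expect a genuine obstacle: the argument is pure bookkeeping. The two points that need a little care are the symmetry reduction — one must check that complementing $A$ or $C$ leaves the hypotheses, the conclusion, and the collection of corners intact — and the structural fact that ``$A,B$ not nested'' is precisely ``all four corners of $A,B$ are nonempty'', which is exactly what kills the bad case in part (i). The only degenerate possibility, an empty corner, arises only for nested $A,B$ and only in part (ii), and is dispatched by the remark that $\emptyset$ meets $C$ in $\emptyset$.
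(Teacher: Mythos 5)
Your proof is correct and follows essentially the same route as the paper: for (i) you normalise to $C\subseteq A$, split on whether $C$ or $C^{*}$ sits inside $B$ or $B^{*}$, place $C$ inside a single corner in the first case, and derive an empty corner of $A,B$ (contradicting non-nestedness) in the second; for (ii) you observe that one side of $A$ absorbs $C$ and hence $C$ misses the two corners on the other side. The only differences are cosmetic — the paper normalises (ii) as $A\subset C$ rather than $C\subseteq A$, and you spell out the bookkeeping about complementation and disjointness of corners that the paper leaves implicit.
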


\begin{proof} 
For (i) by possibly  relabelling   $A$ as $A^*$ and/or $B$ as $B^*$ and/or $C$ as $C^*$  we can assume either
\begin{itemize}
\item [(a)]  $C \subset A$ and $C\subset B$
or 
\item[(b)] $C\subset A$ and $C^*\subset B$.    
\end{itemize}
If (a) then $C\subset A\cap B$ and $C$ is contained in the complement of each of the other corners.
If (b), then   $B^*\subset C\subset A$, and so $A, B$ are nested, which contradicts our hypothesis.

For (ii) if $A \subset C$, then $A\cap B \subset C$ and $A\cap B^* \subset C$.
\end{proof}

Let $\C $ be a set of  cuts 
Let $A$ be a cut and let $M(A, \C)$ be the set of  cuts in $\C $  which are not  nested with $A$. Set $\mu  (A,\C) = |M(A, \C)|$.

\begin{lemma}\label{corners_equality}
Let $\C $ be a nested set of  thin cuts.   Let $B \in \C$ and let $A$ be a thin cut which is not nested with some $B\in \C$, then

\[\mu(A\cap B, \C ) + \mu (A\cap B^*, \C)  <  \mu (A,\C) .\  \ \ \  \]
\end{lemma}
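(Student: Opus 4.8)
The plan is to establish the inequality term by term over the elements of $\C$. For any cut $D$, $\mu(D,\C)$ is the number of members of $\C$ not nested with $D$; hence $\mu(A\cap B,\C)+\mu(A\cap B^*,\C)$ equals the sum over $C\in\C$ of the number (in $\{0,1,2\}$) of the two corners $A\cap B$, $A\cap B^*$ with which $C$ fails to be nested, while $\mu(A,\C)$ equals the sum over $C\in\C$ of $1$ when $C$ is not nested with $A$ and $0$ otherwise. So I would reduce the lemma to showing: (a) for every $C\in\C$ the left-hand contribution of $C$ is at most its right-hand contribution; and (b) for at least one $C\in\C$ it is strictly smaller. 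First note that $A$ and $B$ not nested means all four corners of $A,B$ are non-empty, so $A\cap B$ and $A\cap B^*$ are proper cuts and $\mu$ is defined on them.

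For (b) I would take $C=B$. Since $A$ is not nested with $B\in\C$, the cut $B$ has right-hand contribution $1$; but $A\cap B\subseteq B$ and $A\cap B^*\subseteq B^*$, so $B$ is nested with both $A\cap B$ and $A\cap B^*$ and has left-hand contribution $0$.

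For (a), given $C\in\C$, I would split into two cases. If $C$ is nested with $A$: because $C$ and $B$ both lie in the nested family $\C$, $C$ is also nested with $B$, and since $A,B$ are not nested, Lemma \ref{corner}(i) gives that $C$ is nested with every corner of $A,B$; thus $C$ has left-hand contribution $0$, which is at most its right-hand contribution. If $C$ is not nested with $A$: its right-hand contribution is $1$, and it suffices to show $C$ is nested with at least one of $A\cap B$, $A\cap B^*$. Here again $C$ is nested with $B$ (same family $\C$), so one of $C\cap B$, $C^*\cap B$, $C\cap B^*$, $C^*\cap B^*$ is empty; in each of these four cases a one-line check (for instance $C\cap B=\emptyset$ gives $C\cap(A\cap B)=\emptyset$, and $B\subseteq C$ gives $A\cap B\subseteq C$) shows $C$ is nested with $A\cap B$ or with $A\cap B^*$ --- this is Lemma \ref{corner}(ii) with $A$ and $B$ interchanged. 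Summing (a) over $C\in\C$ and invoking (b) yields the strict inequality.

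The step that needs the most care is the second case of (a): the argument depends essentially on the fact that any $C\in\C$ is automatically nested with $B$, and on the separate observation that nestedness of $C$ with $B$ by itself forces $C$ to be nested with one of the two corners $A\cap B$, $A\cap B^*$ (knowing only that $C$ is nested with $A$ would not be enough there). Everything else is routine bookkeeping with the definition of nestedness and complementation.
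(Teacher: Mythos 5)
Your proof is correct and takes essentially the same approach as the paper: the same term-by-term comparison over $C\in\C$, the same case split on whether $C$ is nested with $A$ (using Lemma \ref{corner} in the first case and nestedness of $C$ with $B$ within the nested family $\C$ in the second), and the same source of strictness, namely that $B$ itself is counted on the right but is nested with both corners on the left. Your explicit remark that non-nestedness of $A$ and $B$ makes the two corners genuine cuts is a small point of care the paper leaves implicit, but the argument is the same.
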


\begin{proof}
  If $C \in \C$ is nested with $A$, then it is nested with both $A$ and $B$ and so it is nested with $A\cap B$ and $A\cap B^*$ by Lemma \ref{corner}.
If $C$ is not nested with $A$, then it  must be nested with one of $A\cap B$ and $A\cap B^*$.   For if, say, $C \subset B$ then $B^* \subset C^*$ and so $A\cap B^* \subset C^*$.
Thus $C$ is not nested with at most one of  $A\cap B$ and $A\cap B^*$  and the lemma follows, since $B$ is counted on the right but not on the left.
\end{proof}

If $\ce _{n-1}$ does not generate $\B _nX$, then there is a thin cut $A \in \C_n - \B _{n-1}$.
We will show that $\B _n$ is generated by a set $\ce _{n-1} \cup \C _n'$, where $\C _n'$ is the set of cuts $A \in \C _n - \B_{n-1}$ that are nested with
every $C \in \ce _{n-1}$.    

To see this, let $A \in \C _n - \B_{n-1}$.   If $A$ is not nested with some $B \in \ce _{n-1}$, then all four corners of $A, B$ are not empty.
We refer to Figure \ref{Cuts}  .    
 By relabelling $A^*$ as $A$ and $B^*$ as $B$ if necessary 
we can assume $a\leq b, c\leq d$.  
    Suppose $a < c$.   Then     $B = B\cap A + B\cap A^*$  and $c(B\cap A) = a +c +f < c +d +e +f =c(B), c(B\cap A^*) = a +e +d < c +d +e +f =c(B)$, which contradicts the fact that $B$ is thin.  Thus $a \geq c$.   If $a > c$, then a similar argument shows 
$A$ is not thin.   Thus $a = c$.     
Also $A = A\cap B + A\cap B^*$ and both $c(A\cap B) \leq c( A)$ and $c(A\cap B^*) \leq c( A)$, so that either one of the corners
$A\cap B, A\cap B^*$ is in $\C_n $ and the other in $\B _{n-1}$ or both corners are in $\C _n$.
Since $B $ and $A$ are not nested we have   $\mu (A, \ce _{n-1}) \not= 0$, and  by Lemma \ref {corners_equality}   we have $\mu (A\cap B, \ce _{n-1})+\mu (A\cap B^*, \ce _{n-1}) < \mu (A, \ce _{n-1})$.  Thus $A = A\cap B + A\cap B^*$ and both $A\cap B$ and $A\cap B^*$ are not nested with fewer cuts in $\ce _{n-1}$  than $A$. 
 An easy induction argument now shows that $\B _n$ is generated by the set of cuts  $\ce _{n-1} \cup \C _n'$, where $\C _n'$ is the set of cuts $A \in C_n - \B_{n-1}$ that are nested with
every $C \in \ce _{n-1}$.

We now show that we can restrict $\C _n'$ further so that it becomes a nested set.

Let $u \in VX$ be a vertex that is separated from a vertex $v \in VX$ by some $A \in \C _n'$ but is not separated from $v$ by any $A \in \ce _{n-1}$.
We show that there is a smallest $A _u \in \C _n'$ that contains $u$.   

Let $A, B \in \C _n'$ and let $u \in A\cap B$.  We show that $A\cap B \in \C _n'$.      Clearly $A\cap B \in \C _n'$ if $A, B$ are nested.
If $A, B$ are not nested, then $A\cap B$ is nested with every $C \in \ce _{n-1}$ by Lemma \ref {corner}.

Again we refer to Figure \ref{Cuts}.    We see that $c(A\cap B) + c(A^*\cap B^*) \leq c(A) + c(B)$ with equality if and only if $e = 0$.
and that $c(A^*\cap B) + c(A\cap B^*) \leq c(A)+c(B)$ also with equality if and only if $f = 0$.    It is not possible that two adjacent 
corners of $A, B$ are in $\B _{n-1}$.  Thus for one pair of opposite corners we have that both corners are in $\C _n'$.
It those two corners are $A\cap B$ and $A^*\cap B^*$ they we are done.   If not, then $c(A^*\cap B) = c(A\cap B^*)  =n$,  and $f = 0$.  
 But since $A\cap B$ separates $u$ from $v$ and $c(A\cap B) \leq n$  it must be in $\C _n'$ with  $e = 0$ and $c(A\cap B) = n$
It follows easily that there is a smallest $A_u \in \C _n'$ containing $u$.       

Let $w \in VX$ be such that $w$ is separated from a $z \in VX$ by some $A\in C_n '$.  It is easy to show that $A_u$ and $A_w$ are equal
or disjoint.  If we take the set of all such $A_u$ then these sets together with $\ce _{n-1}$ form a nested set of generators for $\B _n$.

This concludes the proof of Theorem \ref {maintheorem}
\end {proof}
Essentially the same proof gives the following  stronger result.
\begin {theo} Let $N$ be a network based on the graph $X$ , and let $\B X$ be the Boolean ring defined above.   Let $\cs $ be a subring of $\B X$.
Then $\cs $ has a nested set of generators $\ce = \ce (\cs)$.
such that there is a network $N(T)$ based on a tree  $T = T(\ce )$ and an injective map $\nu : VX \rightarrow VT$, such
that  if two vertices $u,v$ are separated by a cut $A$ in $\cs $ then $\nu u, \nu v$  are separated by a cut $C \in \ce $ with $c(C) \leq c(A)$. Also,  for any edge $e' \in ET$, there are vertices
$s, t \in VX$ such that $e'$ is on the geodesic joining $\nu s$ and $\nu t$ and   $c(e')$ is the capacity of a minimal $(s,t)$-cut in $\cs $.
\end {theo}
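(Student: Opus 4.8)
The plan is to reproduce the proof of Theorem~\ref{maintheorem} with $\B X$ replaced throughout by $\cs$. For each $n$ let $\cs_n$ be the subring of $\cs$ generated by $\{A\in\cs:c(A)\le n\}$; since $X$ is finite every $A\in\cs$ has finite capacity, so $\cs=\bigcup_n\cs_n$ and $\cs_0=\{\emptyset,VX\}$. Call a cut $A\in\cs$ \emph{$n$-thin in $\cs$} if $c(A)=n$ but $A\notin\cs_{n-1}$, and let $\C_n$ be the set of cuts in $\cs$ that are thin in $\cs$ and have capacity at most $n$; note that a cut in $\C_n\setminus\cs_{n-1}$ has capacity exactly $n$, since one of smaller capacity would already lie in some $\cs_k\subseteq\cs_{n-1}$. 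The first thing to record is the relative form of Proposition~\ref{thin}: for $A\in\cs$ one has $c(A)=n$ and $A\notin\cs_{n-1}$ if and only if there are $u,v\in VX$ separated by $A$ but by no cut of $\cs$ of capacity $<n$. The proof is word for word the one already given: if every $u\in A$, $v\in A^*$ were separated by some $B(u,v)\in\cs$ of capacity $<n$, the finite intersections $B_u=\bigcap_v B(u,v)$ and the finite union $A=\bigcup_u B_u$ would lie in $\cs_{n-1}$ because $\cs$ is a subring; the converse is the same observation about a Boolean combination of generators none of which separates $u$ from $v$.

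Next I would build, by induction on $n$, a set $\ce_n\subseteq\cs$ which is nested, closed under complementation, consists of cuts thin in $\cs$ of capacity $\le n$, satisfies $\ce_{n-1}\subseteq\ce_n$, and generates $\cs_n$. Here nothing changes from the proof of Theorem~\ref{maintheorem}: because $\cs$ is a subring, every corner $A\cap B$, $A^*\cap B$, $A\cap B^*$, $A^*\cap B^*$ of two members of $\cs$ is again in $\cs$, while Lemmas~\ref{corner} and~\ref{corners_equality} are statements about cuts in an arbitrary graph. Thus one first shows that $\cs_n$ is generated by $\ce_{n-1}\cup\C_n'$, where $\C_n'$ is the set of cuts in $\C_n\setminus\cs_{n-1}$ that are nested with every member of $\ce_{n-1}$: an offending $A$ (not nested with some $B\in\ce_{n-1}$) is replaced by the two cuts $A\cap B$ and $A\cap B^*$, which add up to $A$ in $\B X$, have capacity $\le c(A)$ by the thinness of $A$ and $B$, and by Lemma~\ref{corners_equality} are each nested with strictly fewer members of $\ce_{n-1}$, so one recurses on $\mu(\,\cdot\,,\ce_{n-1})$. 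Then, exactly as before, for each vertex $u$ that some cut of $\C_n'$ separates from a vertex $v$ while no cut of $\ce_{n-1}$ does, one produces the smallest $A_u\in\C_n'$ containing $u$ (the submodularity identities read off from Figure~\ref{Cuts}, together with the thinness of the cuts and the fact that no two adjacent corners of two cuts outside $\cs_{n-1}$ can both lie in $\cs_{n-1}$, show that $\{D\in\C_n':u\in D\}$ is closed under the relevant intersections); the $A_u$ are pairwise equal or disjoint, and $\ce_n:=\ce_{n-1}\cup\{A_u,A_u^*:u\}$ is the required nested generating set for $\cs_n$.

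Put $\ce=\ce(\cs)=\bigcup_n\ce_n$. This is nested, closed under $*$, and generates $\cs$, so by the tree theorem above $T=T(\ce)$ is a tree with directed edge set $\ce$ and there is $\nu:VX\to VT$ with $\nu u$ and $\nu v$ differing precisely on those members of $\ce$ that separate $u$ and $v$; make $N(T)$ a network by giving each edge $\{C,C^*\}$ the capacity $c(C)$. Both asserted properties now follow. If $A\in\cs$ separates $u,v$, let $m\le c(A)$ be the least capacity of a cut of $\cs$ separating them; a cut realising $m$ lies in $\cs_m=\langle\ce_m\rangle$, and since $\cs_m$ contains a cut separating $u$ from $v$ while the cuts that do not separate $u$ from $v$ form a subring of $\B X$ (the equaliser of the two evaluation homomorphisms $\B X\to\Z_2$ at $u$ and at $v$), some $C\in\ce_m$ must separate $u$ from $v$; then $\nu u,\nu v$ are separated in $T$ by the edge $\{C,C^*\}$, of capacity $c(C)\le m\le c(A)$. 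Conversely, given $e'=\{C,C^*\}\in ET$, the cut $C$ is $n$-thin in $\cs$ with $n=c(C)=c(e')$, so the relative Proposition~\ref{thin} supplies $s,t$ separated by $C$ --- hence with $e'$ on the geodesic in $T$ from $\nu s$ to $\nu t$ --- but by no cut of $\cs$ of capacity $<n$; since $C$ itself lies in $\cs$ and has capacity $n$, the minimal $(s,t)$-cut in $\cs$ has capacity exactly $n=c(e')$.

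The one step that is not a mechanical transcription is the injectivity of $\nu$. For $\B X$ this is automatic, because $\{u\}$ is itself a cut and so some member of any generating set separates $u$ from any other vertex --- equivalently, the evaluation homomorphisms at $u\ne v$ differ on $\B X$. For a general subring $\cs$ this is exactly the requirement that $\cs$ separate the vertices of $X$, i.e.\ that $\{u\}=\bigcap\{A\in\cs:u\in A\}$ for every $u$; under that hypothesis the equaliser argument of the preceding paragraph again forces some member of $\ce$ to separate any two distinct vertices, and $\nu$ is injective. I expect this separation condition is meant as a standing hypothesis, since it holds both for $\B X$ and for the subrings that occur in the applications. (As a bonus that is not part of the statement, the whole construction is canonical, so if $\cs$ is invariant under a group $G$ of automorphisms of $X$ then $T(\ce)$ is a $G$-structure tree, which is what makes the theory recover Stallings' theorem.)
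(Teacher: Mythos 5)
Your proposal is correct and takes essentially the same route as the paper, which itself offers no separate argument beyond the remark that ``essentially the same proof'' of Theorem \ref{maintheorem} applies with $\B X$ replaced by the subring $\cs$; your write-up is precisely that transcription, carried out carefully. Your observation that injectivity of $\nu$ requires $\cs$ to separate the vertices of $X$ (failing, e.g., for $\cs=\{\emptyset,VX\}$) is a fair caveat about the statement as printed rather than a gap in your argument.
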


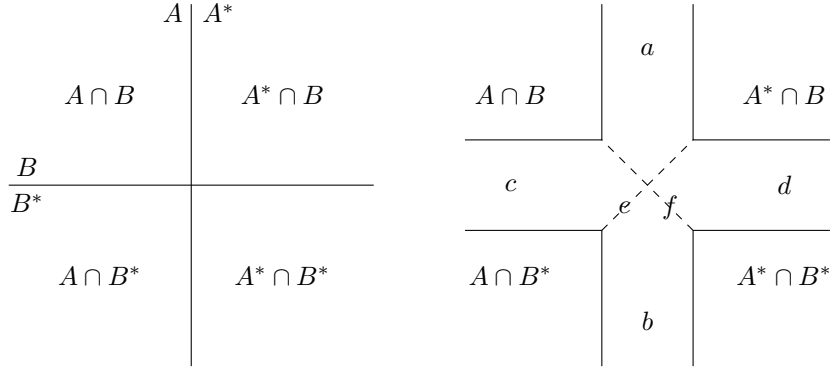
\begin{figure}[ht!]

\centering
\begin{tikzpicture}[scale=.6]

    \draw (0,4)--(8,4);
    \draw (4,0)--(4,8);

\draw (3.6, 7.8) node  {$A$};
\draw (4.6,7.8) node  {$A^*$};
\draw (.4, 4.4) node  {$B$} ;
\draw (.4,3.6) node  {$B^*$} ;
\draw (6,6) node  {$A^*\cap B$};
\draw (2,6) node  {$A\cap B$};
\draw (2,2) node  {$A\cap B^*$};
\draw (6,2) node  {$A^*\cap B^*$};

    \draw (10,5)--(13,5)-- (13, 8);
       \draw (10,3)--(13,3)-- (13, 0);
\draw (18,5)--(15,5)-- (15, 8);
       \draw (18,3)--(15,3)-- (15, 0);

\draw [dashed] (13,3)--(15,5) ;
\draw [dashed] (15,3)--(13,5) ;

\draw (17,6) node  {$A^*\cap B$};
\draw (11,6) node  {$A\cap B$};
\draw (11,2) node  {$A\cap B^*$};
\draw (17,2) node  {$A^*\cap B^*$};

\draw (14,7) node  {$a$};
\draw (14,1) node  {$b$};
\draw (11, 4) node  {$c$};
\draw (17, 4) node  {$d$};
\draw (13.5, 3.5) node  {$e$};
\draw (14.5, 3.5) node  {$f$};

  \end{tikzpicture}

\vskip .5cm \caption{Crossing cuts}\label{Cuts}
\end{figure}

The tree we have constructed is canonically determined, i.e. we made no choices in its construction.
The fact that this is the case is more clearly demonstrated in the construction of the next section which applies to networks based on
arbitrary graphs.   In our example the map $\nu $ is injective but not surjective, as $z$ is not in the image of $\nu $.
We can contract any edge incident with $z$ to obtain a Gomory-Hu tree \cite {GH} in which the vertices are the vertices of $X$, each such
tree will depend on the choice of edge contracted.

\section {Infinite Networks}
Let $X$ be an arbitrary connected simple graph.  It is not even assumed that $X$ is locally finite.
Let $\B X$ be the set of all edge cuts in $X$.   Thus if $A \subset VX$, then $A \in \B X$ if $\delta A$ is finite.
Here $\delta A$ is the set of edges  which have one vertex in $A$ and one in $A^*$.  If we turn $X$ into a network
in which each edge has capacity $1$, then $\B X$ is the set of cuts with finite capacity.

A ray $R$ in $X$ is an infinite sequence $x_1, x_2, \dots $ of distinct vertices such that $x_i, x_{i+1}$ are adjacent for every $i$.
If $A$ is an edge cut, and $R$ is a ray, then there exists an integer $N$ such that for $n > N$ either $x_n \in A$ or $x_n \in A^*$.
We say that $A$ separates rays $R = (x_n), R' = (x_n')$ if for $n$ large enough either $x_n \in A, x_n' \in A^*$ or $x_n \in A^*, x_n' \in A$.
We define $R\sim R'$ if they are not separated by any edge cut.     It is easy to show that $\sim $ is an equivalence relation on the set $\Phi X$ of rays in $X$.
The set $\Omega X = \Phi X/ \sim $ is the set of {edge} ends of $X$.   An edge cut $A$ separates ends $\omega , \omega '$
if it separates rays representing $\omega , \omega '$.     A cut $A$ separates an end $\omega $ and a vertex $v  \in VX$ if for any ray representing
$\omega $,   $R$ is eventually in $A$ and $v \in A^*$ or vice versa.


As before we turn $X$ into  a network  $N$ by considering a map $c : EX \rightarrow \{1, 2, \dots \}$.  

Our main theorem is as follows.

\begin {theo}\label {maintheoremB}  Let $N(X)$ be a network in which $X$ is an arbitrary connected graph.  For each $n >0$, there is a network $N(T_n)$ based on a tree  $T_n$ and a map $\nu : VX\cup \Omega X \rightarrow VT\cup \Omega T$, such
that  $\nu (VX ) \subset VT$ and  $\nu x = \nu y$ for any $x, y \in VX \cup \Omega X$ if and only if $x, y$ are not separated by a cut $A$ with $c(A) \leq n$. 

The network $N(T_n)$ is canonically determined and is invariant under the automorphism group of $N(X)$.

\end {theo}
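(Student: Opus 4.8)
The plan is to follow the blueprint of the proof of Theorem~\ref{maintheorem}, carried out for each fixed $n$, and then to check that the two new features of the infinite setting---ends of $X$ and infinite trees---cause no trouble. As in Section~2, let $\B X$ be the Boolean ring of edge cuts of $X$, let $\B_n$ be the subring generated by the cuts of capacity at most $n$, and let $\C_n$ be the set of thin cuts of capacity at most $n$. The first thing to observe is that Proposition~\ref{thin} and Lemmas~\ref{corner} and~\ref{corners_equality} remain valid in this setting, since their arguments never use finiteness of $X$ as such---only, in the case of Lemma~\ref{corners_equality}, the finiteness of the numbers $\mu(\,\cdot\,,\C)$ involved, and, in Proposition~\ref{thin}, the fact that the intersection $B_u=\bigcap_v B(u,v)$ may be taken over the finitely many crossing cuts that actually occur. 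The genuinely new ingredient is therefore a finiteness lemma, which I would isolate and prove first: a cut $A$ with $c(A)\le n$ crosses only finitely many members of $\C_n$, i.e.\ $\mu(A,\C_n)<\infty$, and, in the same vein, any two vertices of $X$ are separated by only finitely many thin cuts of capacity at most $n$. Every place in Section~2 that tacitly used $X$ finite is, on inspection, only using a statement of this kind; this lemma (in the spirit of \cite{[DD]}) is what makes the transfer possible.

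With the finiteness lemma available, I would construct a chain $\emptyset=\ce_0\subseteq\ce_1\subseteq\ce_2\subseteq\cdots$ of nested, automorphism-invariant sets of thin cuts, with $\ce_n$ generating $\B_n$, by induction on $n$ exactly as in the proof of Theorem~\ref{maintheorem}. Given $\ce_{n-1}$, first pass to $\C_n'$, the set of cuts in $\C_n\setminus\B_{n-1}$ that are nested with every member of $\ce_{n-1}$; the corner computation of Figure~\ref{Cuts} together with Lemma~\ref{corners_equality} and a descending induction on the now finite quantity $\mu(\,\cdot\,,\ce_{n-1})$ shows that $\ce_{n-1}\cup\C_n'$ still generates $\B_n$. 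Then, for each vertex $u$ separated from another vertex by some cut of $\C_n'$ but by no cut of $\ce_{n-1}$, show that there is a smallest cut $A_u\in\C_n'$ with $u\in A_u$ (using, as in Section~2, that two cuts $A,B\in\C_n'$ with $u\in A\cap B$ satisfy $A\cap B\in\C_n'$), and set $\ce_n=\ce_{n-1}\cup\{A_u,A_u^*\}$. One checks that distinct $A_u$ are equal or disjoint, so $\ce_n$ is nested; that $\ce_n$ is automorphism-invariant because it was defined without any choices; and that it generates $\B_n$ (every $A\in\C_n'$ is a Boolean combination of members of $\ce_{n-1}$ and of finitely many $A_u$'s, the finiteness coming from $\delta A$ being finite). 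Finally put $T_n=T(\ce_n)$ via the tree construction of the earlier theorem, made into a network by declaring $c(e')=c(A)$ for the edge $e'=A\in\ce_n$; this is a legitimate network since $1\le c(A)\le n$.

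To produce $\nu$, note that a vertex $v\in VX$ determines the ring homomorphism $A\mapsto[v\in A]$ on $\B X$, and an end $\omega\in\Omega X$ likewise determines a ring homomorphism $\B X\to\Z_2$: put $\omega(A)=1$ if some (equivalently every) representing ray is eventually in $A$, which is well defined precisely because $\sim$ identifies rays not separated by any cut. Restricting either map to $\ce_n$ gives a map $\ce_n\to\Z_2$ satisfying conditions (a) and (b) of the tree construction; for a vertex this is a vertex of $T_n$, and for an end it is a vertex or an end of $T_n$, so we obtain $\nu\colon VX\cup\Omega X\to VT_n\cup\Omega T_n$ with $\nu(VX)\subseteq VT_n$. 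By construction $\nu x$ and $\nu y$ differ exactly on the cuts of $\ce_n$ separating $x$ and $y$, so $\nu x=\nu y$ if and only if no cut of $\ce_n$ separates $x$ and $y$; since every cut of $\ce_n$ has capacity at most $n$ this gives one implication at once, and for the other, if $c(A)\le n$ and $A$ separates $x$ and $y$ then $A\in\B_n$, hence $A$ is a Boolean combination of finitely many members of $\ce_n$, and since $x,y$ are homomorphisms disagreeing on $A$ they must disagree on one of those members. Invariance and canonicity of $N(T_n)$ are immediate, as $\ce_n$ was built canonically and is preserved by every automorphism of $N(X)$.

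The main obstacle is the second half of the construction of $\ce_n$ in the presence of infinite graphs: the existence of a smallest cut $A_u\in\C_n'$ containing a given vertex $u$. In the finite case this was automatic because a descending chain of subsets of a finite set terminates; here one must exclude an infinite strictly descending chain $A_1\supsetneq A_2\supsetneq\cdots$ in $\C_n'$ with $u\in\bigcap A_i$, or rather show such a chain would force $u$ to lie ``at an end'' of the putative tree, contradicting $u\in VX$. This is exactly where the finiteness lemma does its work. It is also the point at which one must verify that $T_n=T(\ce_n)$ is an honest tree---connectedness amounts to the assertion that any two vertices of $X$ are separated by only finitely many members of $\ce_n$---and must correctly sort the restrictions $\omega|_{\ce_n}$ into genuine vertices and ends of $T_n$, so that the ends of $X$ which do escape to infinity within $\ce_n$ land in $\Omega T_n$ and not at phantom vertices. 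Once these finiteness and tree-structure statements are in hand, the remainder is a line-by-line transcription of Section~2.
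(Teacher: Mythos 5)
Your overall architecture (induction on $n$, pass to the cuts nested with $\ce_{n-1}$, select a canonical nested subfamily, build $T(\ce_n)$, and define $\nu$ by restricting the characteristic homomorphisms of vertices and ends) matches the paper's, and the finiteness input you isolate is indeed what the paper supplies via Proposition \ref{tight}. But your selection step has a genuine gap, located exactly where the infinite case differs from the finite one. You take $\ce_n=\ce_{n-1}\cup\{A_u,A_u^*\}$ where $u$ ranges over \emph{vertices} newly separated from other \emph{vertices}. In an infinite graph a thin cut of capacity $n$ need not be thin with respect to any pair of vertices: it may be needed only to separate two ends, or an end from a vertex. The paper's second example (the grid with five rows) defeats your construction: every pair of vertices there is separated by singleton cuts of capacity at most $4$, so $\B_4$ already contains all finite and cofinite sets and your recipe finds no qualifying vertex $u$ at level $n=5$; you would set $\ce_5=\ce_4$ and $T_5=T_4$. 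Yet the two ends of that graph are separated by the vertical cuts of capacity $5$, which lie outside $\B_4$; so your $\ce_5$ does not generate $\B_5$, and your $\nu$ sends both ends to the same vertex of $T_5$ although they are separated by a cut of capacity $5$, contradicting the theorem. The claim in your second paragraph that every $A\in\C_n'$ is a Boolean combination of $\ce_{n-1}$ and finitely many $A_u$'s fails for the same reason (and even when qualifying vertices exist, $A$ may decompose into infinitely many pairwise disjoint $A_u$'s, so finiteness of $\delta A$ does not by itself close that argument).

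The paper avoids this by changing the selection principle. Thinness is redefined relative to pairs $x,y\in VX\cup\Omega X$, and a localization argument through the auxiliary graphs $X_z$ (contracting everything outside the star of a vertex $z$ of $T_{n-1}$) shows that a capacity-$n$ cut nested with $\ce_{n-1}$ but outside $\B_{n-1}$ must minimally separate some pair of vertices \emph{or ends}: a side of $A_z$ containing no vertex of $\nu^{-1}(z)$ is infinite and locally finite and hence carries a ray defining an end. Then, instead of ``the smallest cut containing $u$,'' the paper takes, for each separable pair $x,y\in VX\cup\Omega X$, the cuts that are \emph{optimally nested}, i.e.\ minimize $\mu(\,\cdot\,,\C_n')$ (finite by your finiteness lemma), and proves that the union of these families is nested by the Diekert--Weiss corner argument. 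That device is what lets ends participate on an equal footing with vertices. Note that you cannot simply extend your $A_u$ construction to $u\in VX\cup\Omega X$: in the five-row grid an end lies in a strictly decreasing infinite chain of half-grids of capacity $5$ with empty intersection, so no smallest such cut exists, and some selection rule of the optimally-nested type is forced.
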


\begin {proof} 
Let $\B _n X$ be the subring of $\B X$ generated by the cuts $A$ such that $c(A) < n$

We will prove Theorem \ref {maintheoremB} by showing that  there is a canonically defined nested set $\ce _n$ of generators
of $B_n X$, with the following properties:-
\begin {itemize}
\item [(i)]  If $G$ is the automorphism group of $N(X)$, then $\ce_n $ is invariant under $G$.
\item [(ii)] For each $i <j$, $\ce _i \subseteq  \ce _j$.

\end {itemize}
In order for the nested set $\ce _n$ to be the edge set of a tree, it is necessary and sufficient that it satisfies the finite interval condition, i.e.

\begin {itemize}  
\item [ ] If $C, D \in \ce $ and $C\subset D$, then there are only finitely many $E \in \ce$ such that $C\subset E \subset D$.
\end {itemize}
This  is shown in \cite {[D1]}, \cite {[DD]} and other places.

 The fact that $\ce _n$ satisfies the finite interval condition  follows from the following useful proposition.
 
 If the cut $A$ is thin both
$A$ and $A^*$ are connected.    A cut with this property is called {\it tight} by Thomassen and Woess \cite {thomassen1993}.
It is shown in \cite {[D2]} that there are only finitely many tight cuts $C$ with a fixed capacity  such that $\delta C$ contains a particular edge.
The proof of this in \cite {thomassen1993} is neater and it is reproduced here for completeness.   By replacing each edge with capacity $c(e)$, by $c(e)$ edges joining the same pair of vertices, we can assume that every edge has capacity one.
\begin {prop}\label {tight} For any $e \in EX$, there are only finitely many tight cuts $A$ with $|\delta A| = c(A) = k$ such that $e \in \delta A$.
\end {prop}
\begin {proof}  The proof is by induction on $k$.  For $k=1$ there is nothing to prove.  So assume $k \geq 1$.    We can assume that $e = xy$ is in some tight $k$-cut,  i.e. there is a cut $A$ such that $e \in \delta A$ where $\delta A$ has $k$ edges.  Hence $X - e$ has a path $P$ from $x$ to $y$.   Now every tight $k$ cut that contains $e$ also contains an edge of $P$.   By the induction hypothesis there are only finitely many tight $(k-1)$-cuts in $X-e$ containing an edge of $P$, and we are done.
\end {proof}

The fact that $\ce _n$ satisfies the finite interval condition now follows easily.  
Thus if $C, D$ are tight cuts with $C\subset D$ and $F$ is a finite subgraph of $X$ containing $\delta C\cup \delta D$ and $C\subset E \subset D$
then $\delta E$ must contain an edge of $\delta E$.   It follows from Proposition \ref{tight} that there are only finitely many cuts $E$ such that
$c(E) \leq n$ with this property.

 As before we define a cut  $A \subset VX$ to be {\it thin} if $c(A) = n$  but $A\notin B_{n-1}X$, the subring of $\B X $ generated by those
$C \in \B X $ for which $c(C) < n-1$.   As before $\B X$ is generated by thin elements.
Two cuts are said to be {\it crossing} if they are not nested.

We use induction on $n$.  We assume that $\B _{n-1}$ has a nested set $\ce _{n-1}$ of thin generators satisfying (i), (ii)  and thus  is the edge set of a tree $T = T_{n-1}$, and this tree which has  the properties given in the theorem for $n-1$.      If $v \in VX$ then $v$ determines an orientation $\cO$ of the tree $T_{n-1}$.  Thus $\cO $ is a choice of precisely one of $A$ or $A^*$ for each $A \in ET$.   One chooses $A$ if and only if $v \in A$.  But this orientation determines a vertex of $T$, as there is a vertex $\nu v$  of $T$ such that each edge is oriented to point at  $\nu v$.    Similarly a ray in $X$ determines an orientation of $ET$ that determines either a vertex or an end of $T$.    Suppose that there are elements $x,y \in VX\cup \Omega X$ which are separated by a cut $A$ with $c(A) =n$ but which are not
separated by a cut $C$ with $c(C) < n$.   In this situation $\nu x = \nu y$ is a vertex $z$ of $T$.    We now construct a connected graph
$X_z$ corresponding to $z$. 
Each oriented edge of $T$ incident with $z$ corresponds to a cut $C$ with $c(C) < n$.  If $C, D$ are distinct oriented edges with
initial vertex $z$ then $C^* \subset D$ and if  $E \in \ce  _{n-1}$ is such that $C^* \subseteq E \subseteq D$, then either $C^* = E$ or $E = D$.
Note that if $C,D$ are as above, i.e. both have initial vertex $z$, then $C^*\cap D^* = \emptyset $.  Thus if we consider the edges of $X$  that lie in $\delta C$ for some $C$ with initial vertex $z$ then there are at most two cuts $C \in \ce _{n-1}$ such that $e \in \delta C$.  In fact if we regard $e$ as an oriented edge and take
the edges of $\delta C$ as oriented with initial vertex in $C$, then each oriented edge is in at most one such set $\delta C$. 
We define a graph $X _z$ as follows.   We take $EX _z$ to be the edges which lie in  $\delta C$ for some edge $C$ of $T$ incident with $z$.
We take $VX_z$ to be the set of vertices of these edges, but we identify vertices $u,v$ if  they both lie in $C^*$ when $C$ has initial vertex $z$.

Each vertex $v$ of $X$ for which $\nu v = z$ is a vertex of $VX _z$, but there may be no such vertices.    There is another vertex of $X_z$
for each cut $C$ with initial vertex $z$ and this vertex is obtained by identifying all the vertices of $\delta C$ that are in $C^*$.

It is fairly easy to see that $X_z$ is connected.   Thus any two vertices of $X$ are joined by a path in $X$.    If $u, v$ are two vertices of $X$ that 
become vertices of $X_z$ after carrying out the identifications just described, then the path in $X$ will become a path $p$ in $X_z$ if we delete any
edges that are not in $X_z$.   Here we use the fact that $C^*$ is connected, and when $p$ enters $C^*$ at vertex $w$  it must leave $C^*$ at a vertex $w'$ that is identified with $w$ in $X_z$.

In a similar way a ray in $X_z$ corresponds to a ray in $X$.      If the ray passes through a vertex corresponding to the cut $C$, then the two incident
edges will both lie in $\delta C$.  There will be a path in $C^*$ joining the corresponding vertices before they are identified.    For each such
vertex that is visited by the ray we can add in this path to obtain a ray in $X$.    This ray will belong to an end $\omega \in \Omega X$ such that
$\nu \omega  = z$.    

\begin {lemma}\label {cross}   If $A, B$ are crossing thin cuts, with $c(A) = m, \ c(B) =n$, then  after relabelling $A$ as $A^*$ and $B$ as $B^*$  if necessary, both  $A\cap B^*, A^*\cap B$ are thin cuts with capacities  $m, n$ respectively . 
\end{lemma}    
\begin {proof}  We refer to Figure \ref{Cuts} and follow the argument in the finite case.   Suppose $m\leq n$.
After possible relabelling we can assume  $a\leq b, \ c\leq d$.   If $a < c$ then $c(A\cap B) < n$ and $c(A^*\cap B) < n$ and so 
$B$ is not thin.   If $c <a $, then $A$ is not thin.  Hence  $a = c$.  If $a < b$, then $c(A\cap B) = 2a + f  < a +e + f + b = m$, and so $c(A\cap B^*)= a + e + b  = m$ and  $f = 0$.   Also  $c(A^*\cap B) = a + e + d = n$,   and $A^*\cap B \notin \B_nX$  and so it is thin, and we are done.    
If $a = b$, then $m  = 2a + e  + f  \leq a +e + f + d = n$ and so $b\leq d$.  thus  $a= b=c \leq  d$.  If $e$ is not $0$, then $c(A\cap B) < m, c(A^*\cap B^*) < n$ and the lemma follows easily.  If $e =0$ and $f \not= 0$, then $c(A\cap B^*) < m , c(A^*\cap B) <n$ and the lemma follows if we relabel
$A$ as $A^*$.   
But if $e = f= 0$, then $c(A\cap B) = c(A\cap B^*) = m$ and $c(A^*\cap B) = c(A^*\cap B^*) = n$.      In this situation it is not possible that two adjacent 
corners of $A, B$ are not thin.
   Thus for one pair of opposite corners we have that both corners are thin. By relabelling we can assume these
corners are $A\cap B^*$ and $A^*\cap B$  and the lemma is proved.   

\end {proof}

  We show now  that  for any cut $B$, that  there are only finitely many $A\in \ce _{n-1}$ with which $B$ is not nested.  
  \begin{lemma} Let $B$ be a cut.      There are only finitely many thin cuts $A$ with capacity $n$ that cross $B$. 
\end {lemma}
\begin {proof}    Let $F$ be a finite connected  subgraph of $X$  that contains $\delta B$.   If 
$A$ crosses $B$, then both $A$ and $A^*$ contain a vertex of $\delta B$.   Hence $F$ contains an edge of $\delta A$.   The lemma follows from
Proposition \ref {tight}.

\end{proof} 
Suppose that $\B _{n-1}$ has a canonically determined nested set of generators $\ce _{n-1}$ invariant under $G$.

If $\ce _{n-1}$ does not generate $\B _nX$, then there is a thin cut $A \in \C_n - \B _{n-1}$.

We will show that $\B _n$ is generated by a set $\ce _{n-1} \cup \C _n'$, where $\C _n'$ is the set of cuts $A \in \C _n - \B_{n-1}$ that are nested with
every $C \in \ce _{n-1}$.    

To see this, let $A \in \C _n - \B_{n-1}$.   If $A$ is not nested with some $C \in \ce _{n-1}$, then all four corners of $A, C$ are not empty.
We refer to Figure \ref{Cuts}  .    
 By relabelling $A^*$ as $A$ and $C^*$ as $C$ if necessary, then by Lemma \ref {cross} we have that $A\cap C^*$ is thin with capacity $n$
 and $A^*\cap C$ is thin with capacity $m$.

Since $B $ and $A$ are not nested we have   $\mu (A, \ce _{n-1}) \not= 0$, and  by Lemma  we have $\mu (A\cap B, \ce _{n-1})+\mu (A\cap B^*, \ce _{n-1}) < \mu (A, \ce _{n-1})$.  Thus $A = A\cap B + A\cap B^*$ and both $A\cap B$ and $A\cap B^*$ are not nested with fewer cuts in $\ce _{n-1}$  than $A$. 
 An easy induction argument now shows that $\B _n$ is generated by the set of cuts  $\ce _{n-1} \cup \C _n'$, where $\C _n'$ is the set of cuts $A \in C_n - \B_{n-1}$ that are nested with
every $C \in \ce _{n-1}$. 

A   cut  $A$  is  defined to be  {\it thin   with respect to }$u,v \in VX\cup \Omega X$ if it separates some $x, y   \in VX\cup \Omega X$ and $c(A)$ is minimal 
 among all the cuts that separate $u$ and $v $.   
 
 We will show that $A$ is thin in the earlier sense if and only if it is thin with respect to some  
 $u,v \in VX\cup \Omega X$.
 
 Let $A$ be a cut with $c(A) = n$ and suppose $A$  is nested with every $C \in \ce _{n-1}$, and suppose $A \notin \B _{n-1}$.   We obtain an orientation  $\cO$  of the edges of
 $T$ as follows.    We put $C\in \cO$ if either $C \subset A$ or $C\subset A^*$.   This orientation determines a vertex $z$ of $T$.
 Also $A$ determines a cut $A_z$ in $X_z$.    This is because the vertices of $X_z$ correspond either to a vertex of $X$ or to a cut $C \in ET$ with initial vertex $z$.   But in the latter case each such cut  is nested with $A$ and either $C^* \subset A$ or $C^* \subset A^*$.  The vertices corresponding to those $C$ such that $C^* \subset A$ together with those vertices $v$  of $A$ for which $\nu v = z$ 
 will give 
 $A_z$.    The set $A_z$ cannot consist of finitely many vertices of this latter type, as this would mean that $A \in \B _{n-1}$.
 \begin {prop} \label {thin} Let  $A$ be a cut  such that $c(A) =n$ which is nested with every $C \in ET _{n-1}$.  Then $A \notin \B_{n-1}$  if and only if there exists  $x, y  \in VX\cup \Omega X$  with respect to which $A$ is thin.
 \end {prop} 
 \begin {proof} 
 The argument above shows that neither $A_z$ nor $A_z^*$ can consist of finitely many vertices of degree less than $n$.
 Either $A_z$ contains a vertex in the image of $\nu $ or it is locally finite and infinite.   In the latter case $A_z$ will contain a ray.  We have seen above that a ray in $X_z$ can be expanded to a ray in $X$ and this ray will belong to an end $x $ such that $\nu x = z$.  Similarly there exists 
 $y \in VX \cup \Omega X$ such that $\nu y$ is either a vertex or end of $A^*_z$.

 Suppose $A$ separates $x, y $.    If $A \in \B _{n-1}$,  then $A$  can be written as a finite union of intersections of cuts 
 $B_1, B_2, \dots , B_k$ with $c(B_i) < n, i = 1, 2, \dots k$.   If no $B_i$ separates $x,y$ then neither will $A$.  Since $A$ separates $x,y$,
 it follows that
 $x,y $ are separated by some $B_i$.    If  $A$ is $n$-thin with respect to  $x,y$, then $A \notin \B_{n-1}$.

  Suppose $c(A) = n$ is nested with every $C \in ET$ and  $A\notin \B _{n-1}$. 
  We have seen that there is a vertex $z \in ET$ such that  $A$ corresponds to a cut $A_z \in \B X_z$.    Also $A \in \B _{n-1}X$ if either
  $A_z$ or $A_z^*$ consists of finitely many vertices of degree less than $n$.     Thus if $A \notin \B _{n-1}$, each of $A _z $ and $A _z^*$ either
  contains a vertex $\nu X$ or it is infinite, connected and every vertex has degree less than $n$.   But if $A_z$ has the latter property then it
  contains an infinite ray which determines a ray in $X$ belonging to an end $\omega $ such that $\nu \omega = z$.

  \end{proof}

     For $x, y \in VX \cup \Omega X$ let $\C '(x, y)$ let be the subset of $\C _n'$ consisting of those cuts which are thin with respect to $x, y$.   For $A \in  \C _n'$, $\mu (A, \C_n')$ is the finite number of cuts in $\C _n'$ with which it is not nested.
Let $\C _n'' (x, y)$ be the subset of $\C _n'(x, y)$ consisting of those $A$ for which $\mu (A) = \mu (A, \C _n')$ takes its minimal value.
If $A \in \C _n''(x,y)$, we say that $A$ is optimally nested with respect to $x, y$.
Let $\C _n''$ be the union of all the sets $C _n''(x,y)$.   
\begin {lemma}  The set $C _n''$ is nested.
\end {lemma}
\begin {proof} This is an argument from \cite {[DW]}.   Suppose $C$ is optimally nested  with respect
to  $x_1, x_2$ and $D$ is optimally nested with respect to $x_3, x_4$.   Here the $x_i$'s are elements of $VX \cup \Omega X$.  Suppose $C, D$ are not nested.   Each $x_i$ determines a corner  $A_i$ of
$C, D$.   There are two possibilities.
\begin {itemize}
\item [(i)]  The sets $x_1, x_2$ determine opposite corners, and $x_3, x_4$ determine the other two corners.
\item [(ii)]  There is a pair  of  opposite corners such that one corner is determined by one of $x_1, x_2$ and the opposite corner
is determined by one of $Y_3, Y_4$.  
\end {itemize}

In case (i) $C$ and $D$ separate both pairs $x_1, x_2$ and $x_3, x_4$.   Since $C, D$ are optimally nested with respect to
$x_1, x_2$ and $x_3, x_4$,  we have   $\mu (C) = \mu(D)$.   But now  $A_1, A_2$ are opposite corners, and so $\mu (A_1) +\mu (A_2) < \mu (C)+\mu (D) =2\mu (C)$, by Lemma \ref {corner},  since if an element of $\C _n'$ is not nested with both $A_1$ and $A_2$ it is not nested with both
$C$ and $D$ and if it is not nested with one of $A_1, A_2$ then it is not nested with one of $C$ and $D$.   The strict equality follows because 
$C \in \C _n'$ separates $x,y$ and is not nested with $D$ but both $A_1, A_2$ are nested  with $C$ and $D$.
Since both $A_1, A_2$ separate $x_1$ and $x_2$ we have a contradiction.

In case (ii) suppose these corners are $A_1 = C\cap D$ and $A_3 = C^*\cap D^*$, and that $y_1 \in A_1,  y_3 \in  A_3$.    But then $A_1$ separates $y_1$ and $y_2$ and $A_3$ separates $y_3$ and $y_4$.    Since $C$ is optimally nested with respect to $Y_1$ and $Y_2$ we have $\mu (A_1) \geq \mu (C)$ and since $D$ is optimally nested with respect to 
$y_3$ and $y_4$ we have $\mu (A_3) \geq \mu (D)$.   But it follows from  Lemma  \ref {corners_equality}
 that $\mu (A_1) + \mu(A_3) < \mu (C) + \mu(D)$ and so we have a
contradiction.    Thus $\C ''$ is a nested set  and the proof is complete.

\end {proof}   
To complete the proof of Theorem \ref {maintheoremB} it remains to show that $\B _nX$ is generated by $\ce _n = \ce _{n-1}\cup \C _n''$.

We know that $\B _nX$ is generated the thin cuts wtih capacity at most $n$.    Here we can use either definition of thin by Proposition \ref {thin}.
But we know that any two elements of $\Omega X \cup VX$ that can be separated by a cut of capacity at most $n$ can be separated by a cut in
$\ce _n$.    This means that for every $z \in VT_n$ the graph $X_z$  every cut $A_z$ either $A_Z$ or $A_z^*$ consists of finitely many vertices corresponding to cuts in $\C _n''$.   Thus $\ce _n$ generates $B_nX$.   This completes the proof of the main theorem.
\end {proof}

We illustrate the theorem with two fairly easy examples.

\eject
\begin {exam}
In this example, the graph $X$ is an infinite ladder. Every edge has capacity one.    The two ends of $X$ and some vertices are separated in $T_2$ and all ends and vertices in $T_3$.
The vertices of $T_3$ on the red central line are not in the image of $\nu$.
\end {exam}

\begin{figure}[htbp]
\centering
\begin{tikzpicture}[scale=.7]
\draw [thick]  (0,0) -- (10, 0)  ;
\draw [thick](0,2) -- (10, 2)  ;
\draw [thick, red]  (12,1) -- (22, 1)  ;
\draw [thick] (2,0) --(2,2) ;
\draw [thick] (4,0) --(4,2) ;
\draw [thick](6,0) --(6,2) ;
\draw [thick] (8,0) --(8,2) ;
\draw [dashed, red] (7,-1) --(7,3) ;
\draw [dashed, red] (5,-1) --(5,3) ;
\draw [dashed, red] (3,-1) --(3,3) ;
\draw [dashed, red] (1,-1) --(1,3) ;
\draw [dashed, red] (9,-1) --(9,3) ;

\draw [dashed, blue ] (1.7,3) --(1.7,1.7)--( 2.3, 1.7) -- (2.3, 3) ;
\draw [dashed, blue ] (3.7,3) --(3.7,1.7)--( 4.3, 1.7) -- (4.3, 3) ;
\draw [dashed, blue ] (5.7,3) --(5.7,1.7)--( 6.3, 1.7) -- (6.3, 3) ;
\draw [dashed, blue ] (7.7,3) --(7.7,1.7)--( 8.3, 1.7) -- (8.3, 3) ;

\draw [dashed, blue ] (1.7,-1) --(1.7,.3)--( 2.3, .3) -- (2.3, -1) ;
\draw [dashed, blue ] (3.7,-1) --(3.7,.3)--( 4.3, .3) -- (4.3, -1) ;
\draw [dashed, blue ] (5.7,-1) --(5.7,.3)--( 6.3, .3) -- (6.3, -1) ;
\draw [dashed, blue ] (7.7,-1) --(7.7,.3)--( 8.3, .3) -- (8.3, -1) ;
\filldraw (17, 4) circle (3pt);
\draw (17,5) node  {$T_1$};
\draw (17,2) node  {$T_2$};
\draw (17,-1) node  {$T_3$};
\filldraw (17, 4) circle (3pt);
\filldraw (14, 1) circle (3pt);
\filldraw (16,1) circle (3pt);
\filldraw (18,1) circle (3pt);
\filldraw (20,1) circle (3pt);
\draw [thick, red]  (12,1) -- (22, 1)  ;
\draw [thick, red]  (12,-2) -- (22, -2)  ;
\draw [thick,blue] (14,-3) --(14,-1) ;

\draw [thick,blue] (16,-3) --(16,-1) ;
\draw [thick,blue] (18,-3) --(18,-1) ;
\draw [thick,blue] (20,-3) --(20,-1) ;
\filldraw (14, -1) circle (3pt);
\filldraw (16,-1) circle (3pt);
\filldraw (18,-1) circle (3pt);
\filldraw (20,-1) circle (3pt);

\filldraw (14, -2) circle (3pt);
\filldraw (16,-2) circle (3pt);
\filldraw (18,-2) circle (3pt);
\filldraw (20,-2) circle (3pt);
\filldraw (14, -3) circle (3pt);
\filldraw (16,-3) circle (3pt);
\filldraw (18,-3) circle (3pt);
\filldraw (20,-3) circle (3pt);

\end{tikzpicture}
\end{figure}

\begin {exam}
In this example  as shown in the diagram each edge again has capacity one.  All the  vertices are separated in $T_4$ and all ends and vertices in $T_5$.  There are vertices of infinite degree in $T_3$ and $T_4$.
\end {exam}

\begin{figure}[htbp]
\centering
\begin{tikzpicture}[scale=.7]
\draw [thick]  (0,1) -- (10, 1)  ;
\draw [thick]  (0,3) -- (10, 3)  ;
\draw [thick]  (0,-1) -- (10, -1)  ;

\draw [thick]  (0,0) -- (10, 0)  ;
\draw [thick](0,2) -- (10, 2)  ;
\draw [thick] (2,-1) --(2,3) ;
\draw [thick] (4,-1) --(4,3) ;
\draw [thick](6,-1) --(6,3) ;
\draw [thick] (8,-1) --(8,3) ;
\draw [dashed, red] (7,-2) --(7,4) ;
\draw [dashed, red] (5,-2) --(5,4) ;
\draw [dashed, red] (3,-2) --(3,4) ;
\draw [dashed, red] (1,-2) --(1,4) ;
\draw [dashed, red] (9,-2) --(9,4) ;

\draw [dashed, blue ] (1.7,2.3) --(1.7,1.7)--( 2.3, 1.7) -- (2.3, 2.33)--cycle ;
\draw [dashed, blue ] (3.7,2.3) --(3.7,1.7)--( 4.3, 1.7) -- (4.3, 2.3)--cycle ;
\draw [dashed, blue ] (5.7,2.3) --(5.7,1.7)--( 6.3, 1.7) -- (6.3, 2.3)--cycle ;
\draw [dashed, blue ] (7.7,2.3) --(7.7,1.7)--( 8.3, 1.7) -- (8.3, 2.3)--cycle ;

\draw [dashed, blue ] (1.7,-.3) --(1.7,.3)--( 2.3, .3) -- (2.3, -.3)--cycle ;
\draw [dashed, blue ] (3.7,-.3) --(3.7,.3)--( 4.3, .3) -- (4.3, -.3) --cycle;
\draw [dashed, blue ] (5.7,-.3) --(5.7,.3)--( 6.3, .3) -- (6.3, -.3)--cycle ;
\draw [dashed, blue ] (7.7,-.3) --(7.7,.3)--( 8.3, .3) -- (8.3, -.3) --cycle;

\draw [dashed, blue ] (1.7,1.3) --(1.7,.7)--( 2.3, .7) -- (2.3, 1.3)--cycle ;
\draw [dashed, blue ] (3.7,1.3) --(3.7,.7)--( 4.3, .7) -- (4.3, 1.3) --cycle;
\draw [dashed, blue ] (5.7,1.3) --(5.7,.7)--( 6.3, .7) -- (6.3, 1.3)--cycle ;
\draw [dashed, blue ] (7.7,1.3) --(7.7,.7)--( 8.3, .7) -- (8.3, 1.3) --cycle;

\draw [dashed, thick, brown ] (1.7,4) --(1.7,2.7)--( 2.3, 2.7) -- (2.3, 4) ;

\draw [dashed, thick, brown ] (3.7,4) --(3.7,2.7)--( 4.3, 2.7) -- (4.3, 4) ;
\draw [dashed, thick, brown ] (5.7,4) --(5.7,2.7)--( 6.3, 2.7) -- (6.3, 4) ;
\draw [dashed, thick, brown ] (7.7,4) --(7.7,2.7)--( 8.3, 2.7) -- (8.3, 4) ;

\draw [dashed, thick, brown ] (1.7,-2) --(1.7,-.7)--( 2.3, -.7) -- (2.3, -2) ;

\draw [dashed, thick, brown ] (3.7,-2) --(3.7,-.7)--( 4.3, -.7) -- (4.3, -2) ;
\draw [dashed, thick, brown ] (5.7,-2) --(5.7,-.7)--( 6.3, -.7) -- (6.3, -2) ;
\draw [dashed, thick, brown ] (7.7,-2) --(7.7,-.7)--( 8.3, -.7) -- (8.3, -2) ;

\draw [thick, brown]   (14,2) -- (17, 3) --(15,2) ;
\draw [thick, brown]   (16,2) -- (17, 3) --(17,2) ;
\draw [thick, brown]   (18,2) -- (17, 3) --(19,2) ;
\draw [thick, brown]   (20,2) -- (17, 3) ;

\draw [thick, brown]   (14,-1) -- (17, -.25) --(15,-1) ;
\draw [thick, brown]   (16,-1) -- (17, -.25) --(17,-1) ;
\draw [thick, brown]   (18,-1) -- (17, -.25) --(19,-1) ;
\draw [thick, brown]   (20,-1) -- (17, -.25) ;

\draw [thick, blue]   (14,.5) -- (17, -.25) --(14.7,.5) ;
\draw [thick, blue]   (15.3,.5) -- (17, -.25) --(16,.5) ;
\draw [thick, blue]   (16.7,.5) -- (17, -.25) --(17.3,.5) ;
\draw [thick, blue]   (18,.5) -- (17, -.25) --(18.7,.5) ;
\draw [thick, blue]   (19.3,.5) -- (17, -.25) --(20,.5) ;

\draw [thick, brown]   (14,2) -- (17, 3) --(15,2) ;
\draw [thick, brown]   (16,2) -- (17, 3) --(17,2) ;
\draw [thick, brown]   (18,2) -- (17, 3) --(19,2) ;
\draw [thick, brown]   (20,2) -- (17, 3) ;

\filldraw (17, 3) circle (3pt);
\draw (14,5) node  {$T_1= T_2$};
\draw (13,3) node  {$T_3$};
\draw (13,0) node  {$T_4$};
\draw (13,-4) node  {$T_5$};

\filldraw (17, 5) circle (3pt);
\filldraw (14, 2) circle (3pt);
\filldraw (16,2) circle (3pt);
\filldraw (18,2) circle (3pt);
\filldraw (20,-1) circle (3pt);
\filldraw (17, -.25) circle (3pt);
\filldraw (20, 2) circle (3pt);

\filldraw (15, 2) circle (3pt);
\filldraw (17,2) circle (3pt);
\filldraw (19,2) circle (3pt);
\filldraw (19,-1) circle (3pt);

\filldraw (14, -1) circle (3pt);
\filldraw (16,-1) circle (3pt);
\filldraw (18,-1) circle (3pt);
\filldraw (15, -1) circle (3pt);
\filldraw (17,-1) circle (3pt);
\filldraw (19,-5) circle (3pt);
\filldraw (14, -5) circle (3pt);
\filldraw (16,-5) circle (3pt);
\filldraw (18,-5) circle (3pt);
\filldraw (15, -5) circle (3pt);
\filldraw (17,-5) circle (3pt);
\filldraw (19,-5) circle (3pt);

\filldraw (20,-1) circle (3pt);

\filldraw (14, -3) circle (3pt);
\filldraw (16,-3) circle (3pt);
\filldraw (18,-3) circle (3pt);
\draw [thick,  red]  (14,-4)--(20,-4) ;

\filldraw (15, -4) circle (3pt);
\filldraw (17,-4) circle (3pt);
\filldraw (19,-4) circle (3pt);

\filldraw (14, .5) circle (3pt);
\filldraw (16,.5) circle (3pt);
\filldraw (18,.5) circle (3pt);
\filldraw (20,.5) circle (3pt);

\filldraw (14.7, .5) circle (3pt);
\filldraw (16.7,.5) circle (3pt);
\filldraw (18.7,.5) circle (3pt);

\filldraw (15.3, -3) circle (3pt);
\filldraw (17.3,.-3) circle (3pt);
\filldraw (19.3,-3) circle (3pt);

\filldraw (14.7, -3) circle (3pt);
\filldraw (16.7,-3) circle (3pt);
\filldraw (18.7,-3) circle (3pt);

\filldraw (15.3, .5) circle (3pt);
\filldraw (17.3,.5) circle (3pt);
\filldraw (19.3,.5) circle (3pt);
\draw [thick, blue] (14, -3) --(15,-4) -- (15.3, -3) ;
\draw [thick, blue] (16, -3) --(17,-4) -- (17.3, -3) ;
\draw [thick, blue] (18, -3) --(19,-4) -- (19.3, -3) ;
\draw [thick, blue] (14.7, -3) --(15,-4)  ;
\draw [thick, blue] (16.7, -3) --(17,-4)  ;
\draw [thick, blue] (18.7, -3) --(19,-4)  ;
\filldraw (14, .5) circle (3pt);
\filldraw (16,.5) circle (3pt);
\filldraw (18,.5) circle (3pt);
\filldraw (20,.5) circle (3pt);

\draw [thick, brown]   (14,-5) -- (15,-4) --(15,-5) ;
\draw [thick, brown]   (16,-5) -- (17, -4) --(17,-5) ;
\draw [thick, brown]   (18,-5) -- (19, -4) --(19,-5) ;

\end{tikzpicture}
\end{figure}

\eject
Let $X, N$ be as before.
 For $s, t \in VX\cup \Omega X$ an {\it $(s, t)$-flow } in $N$ is a map  $f :  EX \rightarrow \{ 0, 1, 2, \dots \}$  together with an assignment of a direction to each edge  $e$ for which $f(e) \not= 0$  so that its vertices are $\iota e$ and $\tau e$ and the following holds.

\begin {itemize}
\item [(i)]  For each $v \in VX$ there are only finitely many incident edges $e$ for which $f(e) \not= 0$.
\item [(ii)] If  $f^+(v) = \Sigma ( f(e) | \iota e = v)$ and $f^-(v) = \Sigma (f(e) | \tau e =v) $, then $f^+(v) =f^-(v)$ for every $v \not= s, t$.

\item [(ii)]    For every cut $A$ that does not separate $s,t$   If we put $f^+ (A) =  \Sigma  ( f(e) | e\in \delta A,  \iota e \in A) $ and $f^-(A) = \Sigma (f (e) | e\in \delta A,  \iota e \in A^*)$, then  we have $f^+(A) = f^-(A)$. 
That is, for  every cut that does not separate $s, t$ , the flow into the cut is the same as the flow out.

\end {itemize}  
\begin {prop} For any $(s, t)$-flow and any cut $A$ such that $s\in A, t\in A^*$, the value  $f^+(A)  -  f^-(A)$ does not depend on $A$.  This value 
is denoted $|f|$.
\end {prop}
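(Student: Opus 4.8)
The plan is to prove the stronger, purely \emph{local} statement that $f^{+}(A)-f^{-}(A)$ is additive over disjoint unions of cuts, and then feed in the defining conservation law for cuts that do not separate $s$ and $t$. First I would record the elementary but essential closure fact: if $A,B$ are cuts then so are $A\cap B$, $A\cup B$ and $A\setminus B$, since any edge in the boundary of one of these has an endpoint on the ``$A$-side'' and an endpoint on the ``$B$-side'' of at least one of the two cuts, so its boundary is contained in $\delta A\cup\delta B$ and hence finite. In particular, if $A$ and $B$ both separate $s$ from $t$ (so $s\in A\cap B$ and $t\in A^{*}\cap B^{*}$), then $A\cap B$ is again a cut separating $s$ from $t$, and $A\cap B\subseteq A$, $A\cap B\subseteq B$; thus it suffices to treat the case of two \emph{nested} cuts.

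So suppose $A\subseteq B$ are cuts with $s\in A$ and $t\in B^{*}$, and set $C=B\cap A^{*}$. Then $\delta C\subseteq\delta A\cup\delta B$ is finite, so $C$ is a cut, and $C$ contains neither $s$ (as $s\in A$) nor $t$ (as $t\in B^{*}$); hence $C$ does not separate $s$ and $t$, and the last condition in the definition of an $(s,t)$-flow gives $f^{+}(C)=f^{-}(C)$. Next I would observe that for any cut $S$,
\[
f^{+}(S)-f^{-}(S)\;=\;\sum_{e\,:\,f(e)\neq 0} f(e)\bigl(\chi_{S}(\iota e)-\chi_{S}(\tau e)\bigr),
\]
where $\chi_{S}\colon VX\to\{0,1\}$ is the indicator function of $S$; this sum is finite because the summand vanishes unless $e\in\delta S$, and $\delta S$ is finite. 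Since $B=A\sqcup C$ we have $\chi_{B}=\chi_{A}+\chi_{C}$ on $VX$, and the displayed expression is linear in the indicator, so $f^{+}(B)-f^{-}(B)=\bigl(f^{+}(A)-f^{-}(A)\bigr)+\bigl(f^{+}(C)-f^{-}(C)\bigr)=f^{+}(A)-f^{-}(A)$. Applying this twice, to the nested pairs $A\cap B\subseteq A$ and $A\cap B\subseteq B$, yields $f^{+}(A)-f^{-}(A)=f^{+}(B)-f^{-}(B)$ for arbitrary cuts $A,B$ separating $s$ from $t$; one then \emph{defines} $|f|$ to be this common value.

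I do not expect a serious obstacle. The only point requiring care is that $A$ may be infinite, so one must never sum a vertex conservation identity over all vertices of $A$ (such a sum need not converge); the device above avoids this by writing $f^{+}(S)-f^{-}(S)$ as a \emph{finite} sum indexed by $\delta S$ from the start, after which every manipulation is a manipulation of finite sums. A secondary, routine bookkeeping point is to keep the direction conventions ($\iota e$ versus $\tau e$) used in $f^{\pm}(v)$ and $f^{\pm}(A)$ consistent when verifying the equality $f^{+}(S)-f^{-}(S)=\sum f(e)(\chi_{S}(\iota e)-\chi_{S}(\tau e))$ edge-by-edge.
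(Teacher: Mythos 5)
Your argument is correct and follows the same route as the paper: reduce to nested cuts $A\subseteq B$ via the intersection $A\cap B$, then apply the conservation condition to the cut $B\cap A^{*}$, which does not separate $s$ and $t$. The only difference is presentational — you encode the paper's partition of the symmetric difference $\delta A\,\triangle\,\delta B = \delta(B\cap A^{*})$ as linearity of $\sum_{e}f(e)\bigl(\chi_{S}(\iota e)-\chi_{S}(\tau e)\bigr)$ in the indicator $\chi_{S}$, which is a clean way to keep all sums finite.
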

\begin {proof}
Let $A, B$ be cuts separating $s, t$.    Because $A\cap B$ also separates $s, t$, it  suffices to prove that $f^+(A)  -  f^-(A) = f^+(B)  -  f^-(B)$
when $A\subset B$.    Let $e \in \delta A$.  Either $e \in \delta B$ or $e \in \delta (B\cap A^*)$.   If $e' \in \delta B$ is not in $\delta A$ then $e' \in \delta 
(B\cap A^*$ and $\delta (B\cap A^*)$ partitions into those edges with both vertices in $A$ and those with both vertices not in $A$.
Since $A^*\cap B$ does not separate $s,t$,  $f^+(A^*\cap B) = f^-(A^*\cap B)$  and the value of $f^+ -f^-$ on the edges of $\delta (A^*\cap B)$
that are in $A$ is minus the value on the edges not in $A$.    The symmetric difference of $\delta A$ and $\delta B$ consists of the edges
in $\delta (A^*\cap B)$ and it follows that $f^+(A)  -  f^-(A) = f^+(B)  -  f^-(B)$.

\end {proof}

\begin {theo} [MFMC]  Let $N$ be a network based on a graph $X$.   Let $s, t \in VX \cup \Omega X$.   The maximum value of an  $(s, t)$-flow is the minimal capacity of a cut separating $s$ and $t$.
\end {theo}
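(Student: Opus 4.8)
The plan is to prove the two inequalities separately; one direction is routine, and all the work lies in exhibiting a flow whose value equals the minimal cut capacity.

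For the inequality $\sup|f|\le\min\{c(A):A\text{ separates }s,t\}$, let $f$ be any $(s,t)$-flow and let $A$ be a cut with $s\in A$, $t\in A^*$. By the Proposition preceding the theorem $|f|=f^+(A)-f^-(A)$, and since $f(e)\le c(e)$ for every edge and $f^-(A)\ge 0$ we get $|f|\le f^+(A)\le\sum_{e\in\delta A}c(e)=c(A)$. Hence $n:=\min\{c(A):A\text{ separates }s,t\}$ is a well defined finite integer and $\sup|f|\le n$.

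For the reverse inequality I must produce an $(s,t)$-flow of value $n$. Replacing each edge $e$ by $c(e)$ parallel copies, we may assume every capacity is $1$, so that $n$ is the least number of edges whose removal separates $s$ from $t$. It then suffices to find $n$ pairwise edge-disjoint \emph{$s$--$t$ connections}, where an $s$--$t$ connection means a finite path from $s$ to $t$ if $s,t\in VX$, a ray out of $s$ lying in the end $t$ if $s\in VX$ and $t\in\Omega X$ (and symmetrically), and a double ray with one tail in $s$ and one tail in $t$ if $s,t\in\Omega X$. Indeed, given such $P_1,\dots,P_n$, put $f(e)=1$ with $e$ directed from its $s$--side towards its $t$--side whenever $e$ lies on some $P_j$, and $f(e)=0$ otherwise. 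Each vertex meets only finitely many of the $P_j$ and at most two edges of each, so condition (i) holds; every $P_j$ meets the boundary of a cut not separating $s,t$ in a finite, alternating, even set of edges, which yields conservation there and at all interior vertices; and across a minimum cut each $P_j$ has net crossing number $1$, so $|f|=n$. Together with the first part this proves the theorem.

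To construct the $n$ edge-disjoint connections I would localise the problem using Theorem~\ref{maintheoremB}. Since $s$ and $t$ are separated by a cut of capacity $n$ but by none of smaller capacity, $\nu s=\nu t=:z$ in $T_{n-1}$, so the whole problem lives inside the connected graph $X_z$ attached to $z$, and inside $X_z$ the geodesic $\ell$ joining the (now distinct) images of $s$ and $t$ at level $n$ consists of cuts each of capacity exactly $n$: they separate $s$ from $t$, hence have capacity at least $n$, and they lie in $\ce_n$, hence have capacity at most $n$. Oriented so that the smaller, $s$--containing set comes first, these cuts form a chain $\cdots\subsetneq A\subsetneq A'\subsetneq\cdots$ in which consecutive members $A\subsetneq A'$ bound a ``slab'' $A'\cap A^*$ carrying the $n$ edges of $\delta A$ on one side and the $n$ edges of $\delta A'$ on the other. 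Contracting everything outside the slab to the two endpoints $x_A$ and $x_{A'}$ of those edge sets produces a network in which $x_A$ and $x_{A'}$ are \emph{vertices} whose minimum separating cut is still $n$; running Ford--Fulkerson there (which works even though the network is infinite: each augmenting path is finite and there are at most $n$ of them, and an unreachable sink would produce a separating edge set of size less than $n$) yields $n$ edge-disjoint finite $x_A$--$x_{A'}$ paths, hence a matching between $\delta A$ and $\delta A'$. One then concatenates these pieces along the whole chain, routes each detour through a contracted far side $E^*$ of a cut on $\ell$ (using that $E^*$ is connected, so a prescribed pair of its boundary edges can be joined inside it), and closes off with paths from $s$ to $\delta A_{\min}$ and from $\delta A_{\max}$ to $t$; if $s$ or $t$ is an end, these closing pieces are rays, respectively double-ray tails, converging to it.

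The main obstacle is that the slabs, and $X_z$ itself, need not be finite -- the structure trees may have vertices of infinite degree, as in the paper's second example -- and that when $s$ or $t$ is an end one must assemble infinitely many finite path-pieces into genuine rays or double rays that land at the prescribed end. I expect to overcome this using the nestedness already present in $\ce_n$: each far side $E^*$ is connected and inherits its own structure-tree data, so a prescribed finite set of pairs of boundary edges of $E^*$ can be linked through $E^*$ edge-disjointly by the same argument applied one level down, while the concatenation along the whole chain can be organised by a compactness (König's lemma) argument in the compact space $\{0,1\}^{EX}$, the finiteness of $n$ providing uniform bounds. Once the assembled family is checked to consist of honest $s$--$t$ connections -- in particular that the assembled rays belong to the correct ends -- the flow construction of the third paragraph completes the proof.
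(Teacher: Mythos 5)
Your proposal follows essentially the same route as the paper: the easy inequality via the preceding Proposition, reduction to unit capacities, localisation along the geodesic/ray/double ray of minimal cuts coming from the structure tree, finite Ford--Fulkerson in the contracted slab networks between consecutive cuts of that chain, and concatenation of the resulting pieces (your $n$ edge-disjoint connections are just the path decomposition of the paper's unit-capacity flows with prescribed boundary values on each $\delta D$). The two points you flag as outstanding---assembling the finite pieces into genuine connections and checking that the assembled tails lie in the prescribed ends, i.e.\ that conservation holds across every cut not separating $s$ and $t$---are precisely the points the paper itself passes over with ``we can keep on repeating this process,'' so your attempt matches the paper's argument in both substance and level of detail.
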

\begin {proof}

Let $n$ be the minimal capacity of a cut in $X$ separating $s, t$.  In the structure tree $T =T_n$  there is a flow from $\nu s$ to $\nu t$ with the property that the value of the flow is $n$.  In  $T$ an end corresponds to a set of  rays.  For any vertex of $T$ there is a unique ray starting at that vertex and belonging to the end.    If both $\nu s$ and $\nu t$ are vertices then they are joined by a unique finite geodesic path.   If only one of $\nu s$ and $\nu t$ is a vertex (say $\nu s$), then there is a  unique ray starting at $\nu s$ and representing $\nu t$.  While if both $\nu s$ and $\nu t$ are ends, then there is a unique two ended path  in $T$ whose ends belong to $\nu s$ and $\nu t$.    In each case we get a flow in $T$ by assigning a constant value on the directed edges of the path.  We have to show that each such  flow corresponds to a flow in $X$.   

If $s, t \in VX$, then this follows from the usual proof of the theorem, which we repeat here.

Suppose we have an (s, t)-flow $f$ in $N$.   Let $e_1, e_2, \dots e_k$ be a path $p$ joining $s$ and $t$ with the following
property.    Each edge $e_i$ is given an orientation in the flow $f$.   This orientation will not usually be the same as that of going from
$s$ to $t$.    We say that $p$ is an $f$-augmenting path if for each $e_i$ for which $\iota e $ is $s$ or  a vertex of $e_{i-1}$ we have $f(e_i) < c(e_i)$,
and for each edge $e_i$ for which $\iota e = t$ or $\iota e $ is a vertex of $e_{i+1}$ we have $f(e_i) \not= 0$.   
For any flow augmenting path $p$ we get a new flow $f^*$ as follows.   

\begin {itemize} 

\item [(i)]  If $e \in EX$ is not in the path $p$, then $f^*(e) = f(e)$. 
\item [(ii)]  If $e$ is in the path $p$ and $f(e) =0$, then orient 
$e$ so that $\iota e $ is $s$ or  a vertex of $e_{i-1}$, and put  $f^*(e) = 1$.  Recall that we are assuming that $c(e) \not= 0$ for every $e \in EX$ and 
so we have $f^*(e) \leq c(e)$.
\item [(iii)] If $e$ is in the path $p$ and $\iota e $ is $t$ or  a vertex of $e_{i+1}$ and  $f(e) \not= 0$ , then $f^*(e) = f(e) -1$.
\item [(iv)] If $e$ is in the path $p$ and $\iota e $ is $s$ or  a vertex of $e_{i-1}$, then $f^*(e) = f(e) +1$.

\end{itemize}

The effect of changing $f$ to $f^*$ is to increase the flow along the path $p$.  We have $|f^*| = |f| +1$.

Let $S_f \subset VX$ be the set of vertices  that can be joined to $s$ by a flow augmenting path.  If $t \in S_f$, then we can use the flow augmenting
path joining $s$ and $t$ to get a new flow $f^*$.     We keep repeating this process until we obtain a flow $f$ for which $S_f$ does not contain $t$.
But now $S_f$ is a cut separating $s$ and $t$.   Also if $e \in \delta S_f$, then we have $\iota e \in S_f$ and $f(e) = c(e)$, since otherwise we can
extend the $f$-augmenting path from $s$ to $\iota e$ to an $f$-augmenting path to $\tau e$.    Thus $|f| = c(S_f)$.
But $n$ is the minimal capacity of a cut separating $s$ and $t$ and so $|f| \geq n$.   But also $|f|$ must be less than the capacity of any cut
separating $s$ and $t$ and so $|f| = n$, and  $S_f$ is a minimal cut separating $s$ and $t$.   

If $s \in VX$ and $t \in \Omega X$, then we can build up a flow from $s$ to $t$ in the following way.     Let $D$ be a cut in $ET$ separating $s$ and $t$, so that $s \in D$  and $c(D)\geq n$.  Let $X_D$ be the graph  defined as follows.
The  edge set $EX_D$ consists of all edges $e$ of $X$ which have at least one vertex in $D$, so that either $e \in \delta D$ or $e$ has both vertices
in $D$.   The vertex set $VX_D$ consists of the vertices of $EX_D$, except that we identify all such vertices that are in $D^*$.     Let this vertex be denoted $d^*$.    Thus in $X_D$ the edges incident with $D^*$ are the edges of $\delta D$.      Since $c(D) \geq n$, then as in  the previous case there is a
flow $f_D$ from $s$ to $d^*$ such that $|f_D| = n$.     Let $X_{D^*}$ be the graph defined as for $X_D$, using $D^*$ instead of $D$.   We now have a vertex $d \in VX_{C^*}$ whose incident edges are the edges of $\delta D$.       Now choose another edge $E \not= D$, such that $E^* \subset D^*$.
Thus $s \in E$.       Now form a graph $X(D^*, E)$ whose edge set consists of those edges that have at least one vertex in $D^*\cap E$ and whose 
vertex set is the set of vertices of the set of edges except that we identify the vertices that are in $D$ and also identify the vertices that are in $E^*$.
Thus in $X(D^*, E)$ there is a vertex $d$ whose incident edges are those of $\delta D$ and a vertex $e^*$ whose incident edges are those of 
$\delta E$.  The flow $f$ already constructed takes certain values on the edges of $\delta D$.   We can find a $(d, e^*)$ flow which takes these
same values on $\delta D$.    This flow together with the original flow will give an $(s, e^*)$-flow also denoted $f$ such that $|f| = n$.
We can keep on repeating this process and obtain the required $(s, t)$-flow.

 If $s, t$ are both in $\Omega X$,  choose a minimal cut $M$ separating $s$ and $t$, so that $s \in M, t \in M^*$.
Let $X_s$ be the graph  defined as follows.
The  edge set $EX_s$ consists of all edges $e$ of $X$ which have at least one vertex in $M$, so that either $e \in \delta M$ or $e$ has both vertices
in $M$.   The vertex set $VX_s$ consists of the vertices of $EX_s$, except that we identify all such vertices that are in $M^*$.     Let this vertex be denoted $m_t$.    Thus in $X_s$ the edges incident with $m_t$ are the edges of $\delta M$.      If $c(M) =n$, then by the previous case there is a
flow $f_s$ from $s$ to $m_t$ such that $|f_s| = n$.     If we carry out a similar construction for $M^*$ we obtain a flow $f_t$ from $m_s$ to $t$ with
$|f_t| = n$.   We can then piece these flows together to obtain a flow in $X$ from $s$ to $t$ with $|f| = n$.
\end {proof}
The following interesting fact emerges from the above proof in the case when $s, t \in VX$.
If $s,t \in VX$, the cuts in $C \in ET_n$ such that $s \in C, t \in C^*$ form a finite totally ordered set.   It is the geodesic in $ET_n$ joining $\nu s$ and $\nu t$.
Let $D$ be the smallest minimal cut with this property.     Then $S_f \subseteq D$, since for any vertex $u \in D^*$ there can be no $f$-augmenting path
joining $s$ and $u$.    But this must mean that $S_f = D$, since $S_f \in \B _nX$ which is generated by $ET_n$.
Although the maximal flow between $s, t$ is not usually unique, the smallest minimal cut separating $s, t$ is unique.
The way of obtaining $D$ by successively increasing the flow between $s$ and $t$ is obviously not a canonical process, as we choose  flow
augmenting paths  to increase the flow. 

A finitely generated group $G$ is said to have more than one end,  if a Cayley graph $X = X(G, S)$ of $G$ corresponding to a finite generating set $S$ has more
than one end.
\begin {theo} [Stallings' Theorem]   If $G$ is a finitely generated group with more than one end, then $G$ has a non-trivial action on a tree $T$
with finite edge stabilizers. 
\end {theo}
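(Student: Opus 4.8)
\emph{Setup and the structure tree.} The plan is to realise Stallings' splitting as the action of $G$ on one of the canonical structure trees of Theorem \ref{maintheoremB}. First I would fix a finite generating set $S$ and let $X=X(G,S)$ be the Cayley graph, turned into a network by giving every edge capacity $1$; then $\B X$ is the set of finite edge cuts and $G$ acts on $N(X)$ by left translation, transitively and freely on $VX$. That $G$ has more than one end means exactly that some finite edge cut of $X$ has both sides infinite, equivalently (using that $X$ is locally finite) that $X$ has ends $\omega\neq\omega'$ separated by a cut of finite capacity. Let $n\geq 1$ be the least capacity of a cut separating two distinct ends ($n\geq 1$ since $X$ is connected). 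Applying Theorem \ref{maintheoremB} at this level yields the canonically determined network $N(T_n)$, the tree $T_n$, and the map $\nu:VX\cup\Omega X\to VT_n\cup\Omega T_n$. Since $N(T_n)$ is invariant under the automorphism group of $N(X)$ and $G$ is a subgroup of that group, $G$ acts on $T_n$ with $\nu$ equivariant; and by the choice of $n$ we have $\nu\omega\neq\nu\omega'$, so $T_n$ has at least one edge, and there is a thin cut $A\in\ce_n$ with $c(A)=n$ separating $\omega$ from $\omega'$, so that $A$ and $A^*$ are both infinite.

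\emph{Edge stabilizers are finite.} An unoriented edge of $T_n$ is a pair $\{A,A^*\}$ with $A\in\ce_n$ a thin cut, so $\delta A=\delta A^*$ is a non-empty finite set of edges of $X$. If $g\in G$ fixes this edge then $gA\in\{A,A^*\}$ and hence $g\,\delta A=\delta A$, so the edge stabilizer embeds in the setwise stabilizer $\mathrm{Stab}_G(\delta A)$. Since $G$ acts freely on $VX$, each edge of $X$ has stabilizer of order at most two, so $\mathrm{Stab}_G(\delta A)$, being a subgroup of $\mathrm{Sym}(\delta A)$ extended by the stabilizer of a single edge, is finite. Hence the action of $G$ on $T_n$ has finite edge stabilizers. (The same counting, together with Proposition \ref{tight} and the fact that $X$ has only finitely many $G$-orbits of edges, shows that $\ce_n$ has finitely many $G$-orbits, so $T_n/G$ is a finite graph; this is what will make the eventual graph-of-groups decomposition finite.)

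\emph{No global fixed vertex.} This is the substantive step. Suppose $G$ fixed a vertex $z\in VT_n$. The set $\{v\in VX:\nu v=z\}$ is $G$-invariant, so by transitivity on $VX$ it equals $VX$ or is empty. It cannot be all of $VX$: with $A\in\ce_n$ as above, both $A$ and $A^*$ are non-empty, so a cut of capacity $n$ separates some pair $u\in A$, $u'\in A^*$, whence $\nu u\neq\nu u'$ by Theorem \ref{maintheoremB}. So no vertex of $X$ maps to $z$. I would then invoke the classical fact — essentially the content of \cite{[D1]}, \cite{[D2]} — that, since $A$ and $A^*$ are both infinite and $X$ is locally finite and $G$-vertex-transitive, some translate $gA$ is \emph{strictly} contained in $A$. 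Then $\cdots\subsetneq g^{2}A\subsetneq gA\subsetneq A\subsetneq g^{-1}A\subsetneq\cdots$ is a bi-infinite chain in $\ce_n$, totally ordered by inclusion, i.e. a line in $T_n$ along which $g$ translates; so $g$, and a fortiori $G$, fixes no vertex of $T_n$, a contradiction. I expect this "$gA\subsetneq A$" lemma (the place where local finiteness and vertex-transitivity of $X$ are genuinely used) to be the only non-routine ingredient; everything else is bookkeeping with the structure tree already built.

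\emph{Conclusion.} The above gives an action of $G$ on the tree $T_n$ without a global fixed vertex and with finite edge stabilizers, which is the assertion of Stallings' Theorem. By Bass--Serre theory this action exhibits $G$ as a non-trivial amalgamated free product $A*_CB$ with $C$ finite and $C\neq A$, $C\neq B$, or as an HNN extension $A*_C$ with $C$ finite; and since $T_n/G$ is finite, the vertex groups $A,B$ are finitely generated.
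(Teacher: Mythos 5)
Your proposal is correct and follows essentially the same route as the paper: the canonical structure tree $T_n$ at the least capacity $n$ separating two ends, an induced $G$-action because the construction is canonical, finite edge stabilizers because the stabilizer of an edge $\{A,A^*\}$ permutes the finite set $\delta A$, and non-triviality via an element translating along a line of $T_n$. The only real difference is that you cite \cite{[D1]}, \cite{[D2]} for the existence of $g$ with $gA\subsetneq A$, whereas the paper sketches the standard mechanism directly --- choose $g,h$ with the vertices of $g\delta D$ in $D$ and of $h\delta D$ in $D^*$, and check that one of $g$, $h$, $gh$ properly nests $D$ inside its translate --- so your write-up black-boxes the one genuinely non-routine ingredient that the paper at least outlines.
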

\begin {proof}  Let $n$ be the smallest integer for which there a cut $A$ such that $|\delta A| = n$ which separates two ends $s, t$.
The structure tree $T = T_n$ will have the required property.    Here we use the fact that the construction of $T$ is canonical and so is invariant 
under the action of automorphisms.    Thus the action of $G$ on $X$ gives an action on $T$.    Each edge of $T$ is a cut $C$ with $|\delta C| \leq n$.
The stabilizer of $C$ will permute the edges of $\delta C$ and will therefore be finite.    We also have to show that action is non-trivial.
We know there is a cut  $D$ in $ET$ that separates a pair of ends.  For such a cut both $D$ and $D^*$ are infinite.    The action of $G$ on $X$ is
vertex transitive.    There exists $g \in G$ such that the vertices of  $g\delta D $ are contained in $D$ and an element $h\in G$ such that the vertices
of $h\delta D$ are contained in $D^*$.  It hen follows that for $x = g, x = h$ or $x =  gh$ we have $xD$ is a proper subset of $D$ or $D$ is a proper subset of $xD$.   It follows from elementary Bass-Serre theory that $x$ cannot fix a vertex of $T$.
\end {proof}
This proof is essentially that of \cite {Kroen2009}.

\end{document}